\documentclass[a4paper]{article}

\usepackage[shortlabels]{enumitem}
\usepackage[]{geometry}
\usepackage[utf8]{inputenc}
\usepackage{amssymb,amsmath,amsthm}
\usepackage{hyperref}
\usepackage[usenames, dvipsnames]{color}
\usepackage[normalem]{ulem}
\usepackage{tikz}
\date{\today}

\newcommand{\diff}[0]{\,{\rm d}}
\newcommand{\arctanf}[0]{\arctan_{\chi}}

\newcommand{\tanf}[0]{\tan_{\chi}}
\newcommand{\sinf}[0]{\sin_{\chi}}
\newcommand{\cosf}[0]{\cos_{\chi}}
\newcommand{\tanfab}[0]{\tan_{\chi_{a,b}}}
\newcommand{\sinfab}[0]{\sin_{\chi_{a,b}}}
\newcommand{\cosfab}[0]{\cos_{\chi_{a,b}}}

\newcommand{\pif}[0]{\pi_{\chi}}
\newcommand{\piftwo}[0]{\pi_{\chi}^\perp}

\newcommand{\piffour}[0]{\pi_{\chi}^\angle}
\newcommand{\Betaf}[0]{{\rm B}_{\chi}}

\definecolor{rougeG}{rgb}{.76,0,.12}
\definecolor{vertG}{rgb}{.07,.56,.25}

\numberwithin{equation}{section}
\usepackage{authblk}

\title{
A new family of deformed trigonometric functions and their corresponding Pythagorean-like relations: application to the nonlinear harmonic oscillator\\
}
\author[1]{P. R. Gordoa}

\author[1]{A. Pickering}

\author[* 1,2]{D. Puertas-Centeno}

\author[1]{E. V. Toranzo}

\affil[1]{Departamento de Matemática Aplicada, Ciencia e Ingeniería de los Materiales y Tecnología Electrónica, Universidad Rey Juan Carlos,
	28933 Móstoles (Madrid), Spain}
\medskip
\affil[2]{Data, Complex Networks and Cybersecurity Research Institute, Universidad Rey Juan Carlos, 28028 (Madrid), Spain}
\affil[*] {Corresponding author: david.puertas@urjc.es}

\usepackage{blindtext}
\newtheorem{definition}{Definition}

\newtheorem{coro}{Corollary}
\newtheorem{remark}{Remark}
\newtheorem{teorema}{Theorem}
\newtheorem{prop}{Proposition}

\usepackage[T1]{fontenc}
\usepackage{palatino}
\begin{document}
	\maketitle
\begin{abstract}
	In this work we present an extension of the classical trigonometric functions which include, amongst others, the Lindqvist $p$-trigonometric functions. Additionally, we show their relevance in the study of certain versions of the nonlinear harmonic oscillator equation. Furthermore, we study their more important  mathematical properties and obtain their corresponding Pythagorean-like relations. We also define a new extension of the Beta and Ateb sine functions using these new $\chi$-trigonometric functions. Indeed, these new Beta and Ateb functions are involved in the closed form  expressions obtained for the solutions of the nonlinear oscillator under certain constraints. 
\end{abstract}
\section{Introduction}
The study of different extensions of the trigonometric functions to a more general scenario was carried out by different authors some decades ago~\cite{Shelupsky1959,Burgoyne1964}. However, the original (as far as we know) definition of Shelupsky in 1959~\cite{Shelupsky1959} only began to garner research interest  with  the study of Lindqvist in 1995~\cite{Lindqvist1995}, where this extended family of trigonometric function was used to solve an eigenvalue problem involving the $p$-Laplacian operator. We recall the definition of the monoparametric Lindqvist $p$-arctan function \cite{Lindqvist1995,Edmunds2009}
\begin{equation}\label{eq:p-arcs}
	\arctan_p(x)=\int_0^x\frac{\diff t}{1+t^p}, \qquad x\in [0,\infty), \quad 1<p<\infty\, .
\end{equation}
 Given this  definiton, the $p$-tangent function can be defined as the inverse function of the $p$-arctan function. Further extensions in this framework have been proposed by Drábek and Manásevich~\cite{Drabek1999} through a biparametric version of the Lindqvist $p$-arcsine function, defined as

\begin{equation}\label{eq:arcsinpq}
	\arcsin_{p,q}(x)=\int_0^x\frac{\diff t}{(1-t^q)^{1/p}}=\frac1q {\rm B}\left(\frac1q,\,1-\frac{1}p;\,x^q\right), \qquad x\in [0,1],
\end{equation}
where B$(a,b;z)$ denotes the incomplete Beta function,
\begin{equation}\label{eq:Beta0}
	B(a,b;z)=\int_0^z t^{a-1}(1-t)^{b-1}\diff t, \quad a,b>0,\,\,\, z\in[0,1]\, ,
\end{equation}
which for $z=1$ reduces to the (complete) Beta function, i.e., $B(a,b;1) \equiv B(a,b) = \int_0^1 t^{a-1}(1-t)^{b-1}\diff t$.
 The $\sin_{p,q}$ function is defined as the inverse of the $\arcsin_{p,q}$ function, and the $\cos_{p,q}$ function is defined as the derivative of $\sin_{p,q}$. Then the following  Pythagorean-like identity is obtained from the definition in Eq.~\eqref{eq:arcsinpq},
\begin{equation}\label{eq:q-pyth}
	\sin^q_{p,q}(x)+\cos^p_{p,q}(x)=1,\quad x\in \left[0,\frac{\pi_{p,q}}{2}  \right] 
	\end{equation}
	where $\pi_{p,q} = \frac{2}{q}B\left(\frac{1}{q},\frac{1}{p^*} \right)$ with $p^* = \frac{p}{p-1}$. Moreover, when $p=q,$
 with the notation $\sin_{p,p}=\sin_p$,  it  follows that 
	$$\tan_p(x)=\frac{\sin_p(x)}{\cos_p(x)},$$
	and then the identity~\eqref{eq:q-pyth} can be rewritten as
	\begin{equation}
		1+\tan^p_p(x)=\frac1{\cos^p_p(x)}.
	\end{equation}
 These  functions not only have mathematical interest~\cite{Miyakawa2021,Miyakawa2022,Takeuchi2021}, but also practical  interest,  as they have been proved to be useful tools in the description of nonlinear classical~\cite{Cveticanin2020a, Skrzypacz2020,Vujkov2022,Wei2012} and quantum~\cite{Shababi2016,Ahrami2024,Gordoa2025} systems. In this work we focus,  indeed, on  their application to the nonlinear harmonic oscillator~\cite{Cveticanin2020b, Ghosh2019,Cveticanin2025a,Cveticanin2025b}, described by the following diferential equation \cite{Cveticanin2020b},
 	\begin{equation}\label{eq:nonlinear_osc0}
 		\ddot x+\omega_\alpha^2 x^{\alpha}=0,
 	\end{equation}
 	and further extensions of  this last of  the form
 	\begin{equation}\label{eq:nonlinear_osc_f}
 		\ddot x+\omega_\alpha^2 x^{\alpha}=f(x,\dot x),
 	\end{equation}
 	under certain restrictions of the function $f$.

An alternative useful approach is through the use of the so-called Ateb trigonometric funcions, originally defined in~\cite{Rosenberg1963}. From~\cite{Senik1969,Rebot2023} and after some algebraic manipulations one can write the Ateb sine function as
\begin{equation}
	{\rm sa}(n,m;x)=\left[{\rm B}^{-1}\left(\frac1{n+1},\,\frac{1}{m+1};2x\right)\right]^\frac 1{n+1}.
\end{equation}
For convenience we will adopt the notation ${\rm sa}(n;x)={\rm sa}(n,1;x).$


  
  In a totally different context, the extension of the exponential and logarithmic functions is central in the description of non-extensive thermodinamical systems\cite{Tsallis1988,Tsallis2023}.
Indeed, the use of  a  generalized logarithm function with suitable properties to construct generalized non-additive entropies is an interesting and fruitful research area~\cite{Tempesta2011,Tempesta2019}.  Within this framework, some authors have proposed the following definition of a deformed logarithm function.  Given a function $\chi:\mathbb{R}^+ \longrightarrow \mathbb{R}^+$ increasing on $\mathbb{R}^+$ (known as  the  deformation function)  the deformed $\chi$-logarithm is defined as~\cite{Naudts2004,Zhang2015,Matsuzoe2015,Matsuzoe2016}
  	\begin{equation}\label{eq:chi_log}
  		\log_\chi(x)=\int_1^x \frac{\diff t}{\chi(t)}, \quad x\in \mathbb{R}^+.
  	\end{equation}
  	Note that it is an increasing and concave function on $\mathbb{R}^+$
  	(we could consider the case of $\chi$ non-increasing, if we were to admit a non-concave logarithm).
  	 
  	 In particular, the Tsallis $q$-logarithm is  recovered  when $\chi(t)=t^q$, 
  		\begin{equation}\label{eq:chi_log_q}
  		\log_q(x)=\int_1^x \frac{\diff t}{t^q} =\frac{x^{1-q}-1}{1-q},
  	\end{equation}
 whose inverse is known as the Tsallis $q$-exponential function given by
\begin{equation}
	\exp_q(x)=\left(1+(1-q)x\right)_+^\frac1{1-q},
\end{equation}  	
where $(\cdot)_+ = \max\{0,\cdot\}$, which mainly corresponds with the so-called $q$-stretched Gaussian, or generalized Gaussian, ubiquitous in the study of nonextensive systems \cite{Tsallis2023}. It is worth  mentioning  that these generalized $q$-stretched Gaussians are also the extremizers of certain generalized families of informational inequalities  of  Cramér-Rao, Stam and moment-entropy types~\cite{Lutwak2005,Bercher2012}.  In this regard, it has been recently found that the Lindqvist $p$-trigonometric functions are closely related with the extremizers of further extensions of the mentioned informational inequalities~\cite{Puertas2025,IP2026b}



In this work we propose a further extension of the trigonometric functions by using the $\chi$-logarithm framework in Eq~\eqref{eq:chi_log} to define a deformed $\chi$-arctan function and the subsequent $\chi$-trigonometric functions. In this scenario the Lindqvist $p$-trigonometric functions appear as the natural trigonometric counterpart of the Tsallis $q$-exponential and $q$-logarithmic functions. In addition, their corresonding $\chi$-Pythagorean like relations are written through the use of both classical and generalized logarithmic functions. Moreover, we show the utility of this new family of trigonometric function in the study of the nonlinear oscillator. To do so, we define a new extension of the incomplete Beta function, as well  of  the Ateb sine function,  which   allow us to express, in closed form, the exact solution of certain family of nonlinear differential equations. To accomplish this, the generalized Sundman-like transformations of power type, recently introduced  by the authors~\cite{Gordoa2026}, will play a crucial role. \\

The structure of the work is the following: in Section 2 we present the main definitions of the $\chi$-trigonometric functions and we study  their  main mathematical properties. In Section 3 we show the counterpart of the Pythagorean-like relations in the $\chi$-deformed framework. We use them to define the corresponding  $\chi$-arcsine  and $\chi$-arccosine functions. In Section 4 we detail some remarkable examples. In Section 5 we define the $\chi$-deformed counterpart of the incomplete Beta and Ateb functions and we show their relation with certain families of autonomous second order differential equations. In addition, we review the definition of the generalized Sundman transformations. In Section 6, we show the utility of these new $\chi$-Beta and $\chi$-Ateb functions  in expressing  the exact solution of the nonlinear harmonic oscillator under certain restrictions  on the potential function.

\section{$\chi$-trigonometric functions}
In this section we introduce the main notions of the paper. More precisely, we first define the generalized $\chi$-arctan and $\chi$-tan functions, which in turn will let us to deal with the  definitions of the $\chi$-sine and $\chi$-cosine functions. We study some basic properties such as integral expressions, the derivative of the $\chi$-cosine function and their implication in simple differential equations. In addition, we also give the definition of a pair of perpendicular-like and oblique-like ''$\pi_\chi$'' constants.
\subsection{The $\chi$-arctan and $\chi$-tan functions}
\begin{definition}\label{def:chi_arctan}
	Let $\chi$ be a real positive and increasing function defined on $\mathbb{R}^+\cup\{0\}$. We define the $\arctanf$ function as 
	\begin{equation}\label{eq:chi_arctan}
		\arctanf(x)=\int_0^x\frac{\diff t}{1+\chi^2(t)}, \quad x \in \mathbb{R}^{+}\cup\{0\}\, .
			\end{equation}
	As $\arctanf$ is an increasing function, it has a well-defined inverse which will be denoted as $\tanf$.
\end{definition}

It follows from the above that 
\begin{equation}\label{eq:tanf_der}
	\tanf' (x)=1+\chi^2\left(\tanf(x)\right)\, ,
\end{equation}
which means that $\tanf$ is an increasing function on its domain as well, as  we might wish. Trivially, from the definition  it directly follows that  $\tanf(0)=0.$
In addition,  one can define the following pair of "$\pi_\chi$" constants, which will be very useful throught the paper
\begin{equation}
	\pif^\perp=2\,\arctanf(\infty)\leqslant \infty
\end{equation}
and 
\begin{equation}
	\pif^\angle=4\,\arctanf(1)\leqslant 4,
\end{equation}
for any function $\chi$ which, in general, are  different. We refer to them as perpendicular and oblique "$\pi_\chi$" constants, respectively. In particular, after simple algebraic manipulations the following relation is  obtained 
\begin{equation}
	\piftwo-\piffour=2\left(\int_1^\infty\frac{\diff t}{1+\chi^2(t)}-\int_0^1\frac{\diff t}{1+\chi^2(t)}\right).
\end{equation}
Note that, when $\chi(t)=t$, both perpendicular and oblique constants are equal and the classical $\pi$ constant is recovered. 

From the definitions of the $\pi_\chi$ constants as well as the $\arctanf$, we infer that, in the case  that $\pif^\perp < \infty$, 
	\[
	\arctanf(x) : \mathbb{R} \mapsto \Omega_{\chi} = \left[0,\frac{\pif^\perp}{2} \right) \subset \mathbb{R}\, ,
	\]
	i.e., $\tanf$ has bounded support. In this case, we can always extend this function to $\left(-\frac{\pif^\perp}{2} ,\frac{\pif^\perp}{2} \right)$ by oddness ($\tanf(x) = -\tanf(-x)$) and to $\mathbb{R}$ by $\pif^\perp$-periodicity, which imposes that, since $\tanf(0) = 0$,
	\[
	\tanf\left(n\pif^\perp \right) = 0 , \quad \forall n\in\mathbb{Z}\, .
	\]
	It is worth  highlighting  that such  a  periodic extension comes naturally as long as $\chi$ has definite parity.

\begin{remark}The classical and generalized Lindqvist tangent functions~\cite{Lindqvist1995} are recovered when $\chi(t)=t$ and $\chi(t)=t^{p/2}$, respectively. 
More precisely, adopting the notation $\chi_p(t):=t^{p/2}$, it follows that

 \[
 \arctan_{\chi_{p}}(x)=\int_0^x\frac{\diff s}{1+s^{p}} = \arctan_{p}(x) \,, \quad x\in [0,\infty) \,.
 \]
 
 Then,
 \begin{equation}\label{eq:tan_p}
 	\tan_{\chi_{p}}(x)= \tan_p(x)\, , \quad x \in \Omega_{\chi_{p}} = \left[0, \frac{\pi^{\perp}_{\chi_{p}}}{2}\right)\, ,
 \end{equation}
 where
 \begin{equation}\label{eq:pi_p}
 \pi^{\perp}_{\chi_{p}}\equiv \pi_p =  2\int_0^{\infty}\frac{\diff s}{1+s^{p}} = \frac{2}{p}B\left(\frac{1}{p},\frac{1}{p^*} \right), \quad p^* = \frac{p}{p-1}\, .
 \end{equation}
\end{remark}

In addition, $\pi^{\angle}_{\chi_{p}}=4\arctan_p(1).$ Let us recall that, $\tan_p(\piffour/4)=1$ is equivalent to $\cos_p(\piffour/4)=\sin_p(\piffour/4).$ As a consequence of~\eqref{eq:q-pyth} with $q=p$, the latter implies $\sin_p(\piffour/4)=1/2^p.$ Then $\pi^{\angle}_{\chi_{p}}=4\arcsin_p(2^{-1/p})$
$$\pi^{\angle}_{\chi_{p}}=\frac 4 p {\rm B}\left(\frac 1p,\,\frac 1{p^*},\frac 12\right).$$
Note that, for any $p\neq 2$, one has that $\pi_{\chi_{p}}^\angle\neq \pi_p.$  

\subsection{The $\chi$-sine and $\chi$-cosine functions}
We define the real functions $\sinf$ and $\cosf$ via the relations
\begin{equation}\label{eq:scsat}
	\frac{\sinf(x)}{\sinf'(x)}=\tanf(x),\quad \cosf(x)=\sinf'(x).
\end{equation}
It immediately follows that  
\begin{equation}\label{dlogsinf}
	\left(\log(\sinf(x))\right)'=\frac1{\tanf(x)},
\end{equation}
which inspires the following definition.

\begin{definition}[$\chi$-sine and $\chi$-cosine functions]
	The $\chi$-sine and $\chi$-cosine functions are defined on $\overline{\Omega}_{\chi} = \left[0, \piftwo/2 \right]$ as
	\begin{equation}\label{eq:sine_def}
	\sinf(x):=\exp\left(\int_{\piftwo/2}^x \frac{\diff t}{\tanf(t)}\right)
\end{equation}
and
	\begin{equation}\label{eq:cosf_def1}
		\cosf(x):=\sinf'(x)=\frac{\sinf(x)}{\tanf(x)}, 
	\end{equation}
	respectively.
\end{definition}

\begin{remark}\label{remark:pi2}
	The choice of $\pif/2$ in the limit of the integral enssures that 
	\begin{equation}
	\sinf(\piftwo/2)=1.
	\end{equation}

	In addition, $\tanf(\piftwo/2)=\infty$ by construction. Then, taking into account the definition~\eqref{eq:cosf_def1}  it follows that  
\begin{equation}
	\cosf(\piftwo/2)=0.
\end{equation}

\begin{remark}
	By definition, both $\sinf$ and $\cosf$ are nonnegative functions on $\overline{\Omega}_{\chi}$, and $\sinf$ is increasing on $\Omega_{\chi}$.
\end{remark}

		\begin{remark}\label{remark:q-case1} 
		In a similar way  to  the $\tanf$ funtion, in the case that  $\pif^\perp<\infty$, we can extend the domain of both  the  $\sinf$ and $\cosf$ functions to $\mathbb{R}$ by the following rules:
			\begin{itemize}
				\item Extension to the domain $x\in [0,	\pif^\perp]$
				$$\forall x\in \left( \frac{\pif^\perp}{2}, \pif^\perp\right],\qquad \sinf(x) := \sinf(\pif^\perp - x),\quad{\rm and} \quad \cosf(x) := -\cosf(\pif^\perp - x).$$
				\item Extension to the domain $x\in [-\pif^\perp,	\pif^\perp]$
				$$\forall x\in [-\pif^\perp,0),\quad\sinf(-x): = -\sinf(x),\quad{\rm and} \quad \cosf(-x) := \cosf(x).$$
				\item Extension to the plain domain $x\in\mathbb R$
				$$\forall n\in\mathbb Z,\;\forall x\in\mathbb R,\quad \sinf(x) := \sinf(x+2n\pif^\perp),\quad{\rm and} \quad \cosf(x) := \cosf(x+2n\pif^\perp).$$
			\end{itemize}
	\end{remark}  
\end{remark}

	\begin{remark}
	By definition, one has that $\tanf(\piffour/4)=1$, that is, 
	\begin{equation}
		\cosf\left(\frac{\piffour}4\right)=\sinf\left(\frac{\piffour}4\right)=\exp\left(\int_{\piftwo/2}^{\piffour/4} \frac{\diff t}{\tanf(t)}\right).
	\end{equation}
In addition, the change of variable $s=\tanf(t)$ gives
		\begin{equation}\label{eq:cospi4}
		\cosf\left(\frac{\piffour}4\right)=\sinf\left(\frac{\piffour}4\right)=\exp\left(-\int_1^{\infty} \frac{\diff s}{s\,(1+\chi^2(s))}\right):=\mathcal K_\chi,
	\end{equation}
	where we have introduced the notation $\mathcal K_\chi$ to make  reading easier .
	From the latter equation, it is natural to impose the condition 
\begin{equation}\label{eq:cond_pi4}
	\int_1^\infty \frac{\diff s}{s(1+\chi^2(s))}<\infty
\end{equation}	
	to ensure that the constant $\mathcal K_\chi$ be different  from   zero.
		A simple convergence study shows that if $\chi$ is an increasing algebraic function the latter integral is always convergent. In addition, if $\chi(t)\sim\log^p(t)$ when $t>>1$, then the integral is convergent for $p>1/2$.  
\end{remark}
%

\begin{remark}
		
	Letting $s=\tanf(t)$ in Eq.~\eqref{eq:sine_def}, and recalling that $\tanf(\piftwo/2)=\infty$, one can express the $\chi$-sine function as 
		\begin{equation}
			\sinf(x)=\exp\left(\int_\infty^{\tanf (x)} \frac{\diff s}{s(1+\chi^2(s))}\right)
		\end{equation}
	When we evaluate the $\chi$-sine function  at zero it follows that 
	
	\begin{equation}
		\sinf(0)=\exp\left(-\int_{0}^\infty \frac{\diff s}{s(1+\chi^2(s))}\right).
	\end{equation}
As a consequence, if
	$$\int_{0}^\infty \frac{\diff s}{s(1+\chi^2(s))}<\infty
	$$
	then  $\sinf(0)>0,$ and consequently $\cosf(0)=\frac{\sinf(0)}{\tanf(0)}=\infty,$ which prevents  a correct definition of the $\chi$-cosine function at  zero.   However,  if we assume that  
	 \begin{equation}\label{cond:0}
	 \int_{0}^\infty \frac{\diff s}{s(1+\chi^2(s))}=\infty
	 \end{equation}
	  
	 we obtain $\sinf(0)=0$, and then $\cosf(0)$ is an indetermination which can be solved as we show below.
	\end{remark}

	 In general, we will assume the following conditions throughout the paper:
	\begin{equation}\label{eq:conditions}
		\int_{0}^1 \frac{\diff s}{s(1+\chi^2(s))}=\infty,\quad 	 	\int_1^\infty \frac{\diff s}{s(1+\chi^2(s))}<\infty,\quad 	 	\int_{0}^1 \frac{\chi^2(s)\diff s}{s(1+\chi^2(s))}<\infty.
	\end{equation}
	The first condition ensures that $\sinf(0)=0,$ the second condition ensures that $\sinf(\piffour/4)\neq0$ and the third condition ensures that  $\cosf(0)$ be finite.  In addition, the first and the last conditions jointly impose that $\chi(0)=0$.
\begin{prop}
	Assuming in force the conditions in Eq.~\eqref{eq:conditions}  it  follows that 
		
	\begin{equation}\label{eq:sin0cos0}
		\sinf(0)=0,\qquad \cosf(0)=\exp\left(\int_0^1 \frac{\chi^2(s)\diff s}{s\left(1+\chi^2(s)\right)}-\int_1^\infty \frac{\diff s}{s\left(1+\chi^2(s)\right)}\right)
	\end{equation}
	Moreover, the following expression for the cosine function is fulfilled

	\begin{align}\label{eq:cosin_alt_exp}
		\cosf(x)&=
		\exp\left(-\int_1^{\infty} \frac{\diff s}{s(1+\chi^2(s))}-\int_{1}^{\tanf (x)} \frac{\chi^2(s)\diff s}{s(1+\chi^2(s))}\right)\nonumber\\
		& = \mathcal K_\chi \exp\left(-\int_{1}^{\tanf (x)} \frac{\chi^2(s)\diff s}{s(1+\chi^2(s))}\right),\quad x\in \overline{\Omega}_{\chi} .
	\end{align}	
	
\end{prop}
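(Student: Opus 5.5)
The plan is to work entirely through the substitution $s=\tanf(t)$ already used in the preceding remarks, and to use the elementary identity
\[
\frac{1}{s(1+\chi^2(s))}=\frac1s-\frac{\chi^2(s)}{s(1+\chi^2(s))},
\]
which splits the logarithmic derivative of $\sinf$ into a "$\log s$" part and a remainder.

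First, $\sinf(0)=0$. The previous remark gives
\[
\sinf(0)=\exp\left(-\int_0^\infty \frac{\diff s}{s(1+\chi^2(s))}\right).
\]
By the first condition in \eqref{eq:conditions} the integral over $(0,1)$ diverges to $+\infty$. The integrand is positive, and the integral over $(1,\infty)$ is finite by the second condition, so the exponent is $-\infty$ and $\sinf(0)=0$.

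Next, the general formula \eqref{eq:cosin_alt_exp}. I start from $\cosf(x)=\sinf(x)/\tanf(x)$ together with
\[
\sinf(x)=\exp\left(\int_\infty^{\tanf(x)}\frac{\diff s}{s(1+\chi^2(s))}\right),
\]
and split the integral at $s=1$:
\[
\int_\infty^{\tanf(x)}=-\int_1^\infty+\int_1^{\tanf(x)}.
\]
The first piece gives the factor $\mathcal K_\chi$ from \eqref{eq:cospi4}. On the second piece I apply the identity above: $\int_1^{\tanf(x)}\diff s/s=\log\tanf(x)$, and this cancels exactly the division by $\tanf(x)$. What remains is
\[
\cosf(x)=\mathcal K_\chi\exp\left(-\int_1^{\tanf(x)}\frac{\chi^2(s)\diff s}{s(1+\chi^2(s))}\right),
\]
which is both lines of \eqref{eq:cosin_alt_exp}. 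This holds for $x\in\Omega_\chi$ with $x>0$, so that $0<\tanf(x)<\infty$.

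The endpoints need a separate check. At $x=\piftwo/2$ we have $\tanf\to\infty$, and the integral $\int_1^\infty \chi^2/(s(1+\chi^2))\,\diff s$ diverges, since its integrand is at least of order $1/(2s)$ once $\chi\ge1$. If $\chi$ stays below $1$, the integrand is still comparable to $1/s$ because $\chi$ is increasing and positive. The exponent is therefore $-\infty$, which is consistent with $\cosf(\piftwo/2)=0$ from Remark~\ref{remark:pi2}.

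At $x=0$, where $\tanf(0)=0$, the original definition is the indeterminate form $0/0$. The closed formula is the right way to resolve it. I would define $\cosf(0)$ as the limit $x\to0^+$, or equivalently as the limit of $\sinf'$. Along this limit $\tanf(x)\to0^+$ monotonically, and the third condition in \eqref{eq:conditions} guarantees that
\[
-\int_1^{\tanf(x)}\frac{\chi^2(s)\diff s}{s(1+\chi^2(s))}\;\longrightarrow\;\int_0^1\frac{\chi^2(s)\diff s}{s(1+\chi^2(s))}<\infty.
\]
This gives the stated expression for $\cosf(0)$ in \eqref{eq:sin0cos0}. Since $\sinf$ is continuous on $[0,\piftwo/2]$ and $\sinf'$ has a finite limit at $0$, the mean value theorem shows that this limit is indeed the one-sided derivative $\sinf'(0)$.

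The main obstacle is this last point: justifying that the value obtained by continuity coincides with $\sinf'(0)$, i.e.\ that the indeterminacy is resolved consistently with the definition $\cosf=\sinf'$. Everything else is bookkeeping of the substitution and the splitting of the integrals.
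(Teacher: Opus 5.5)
Your proof is correct and is essentially the paper's argument. The paper performs the same cancellation of $\log\tanf(x)$ in the $t$-variable: it writes $\tanf(x)=\exp\bigl(\int_{\piffour/4}^x \tanf'(t)/\tanf(t)\,\diff t\bigr)$ and uses $1-\tanf'=-\chi^2(\tanf)$ before substituting $s=\tanf(t)$, whereas you substitute first and use the partial-fraction split. Your endpoint treatment (the limit $x\to0^+$ via the third condition with the mean value theorem identifying it as $\sinf'(0)$, and the divergence at $x=\piftwo/2$) is more careful than the paper, which simply evaluates the final formula at $\tanf(0)=0$.
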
	  
\begin{proof}
From the condition~\eqref{cond:0}  it directly follows that  $\sinf(0)=0$. We continue with 
	$$
	\cosf(x)=\frac{\sinf(x)}{\tanf(x)}.
	$$
	In addition, recalling that $\tanf(\piffour/4)=1,$ the following identity is fulfilled
	$$
	\int_{\piffour/4}^x \frac{\tanf'(t)\diff t}{\tanf(t)}=\log\left(\tanf(x)\right)-\log\left(\tanf(\piffour/4)\right)=\log\left(\tanf(x)\right).
	$$
	Then, from the latter equality and by using the definitions of the $\chi$-sine and $\chi$-cosine function in Eqs~\eqref{eq:sine_def} and~\eqref{eq:cosf_def1} we can compute
	\begin{eqnarray*}
	\cosf(x)&=&\frac{\sinf(x)}{	\exp\left(\int_{\piffour/4}^x \frac{\tanf'(t)\diff t}{\tanf(t)}\right)}
	\\
	&=&
	\exp\left(\int_{\piftwo/2}^x \frac{\diff t}{\tanf(t)}-\int_{\piffour/4}^x \frac{\tanf'(t)\diff t}{\tanf(t)}\right)
	\\
	&=&
	\exp\left(\int_{\piftwo/2}^{\piffour/4} \frac{\diff t}{\tanf(t)}+\int_{\piffour/4}^x \frac{\diff t}{\tanf(t)}-\int_{\piffour/4}^x \frac{\tanf'(t)\diff t}{\tanf(t)}\right).
	\\
&=&
\exp\left(\int_{\piftwo/2}^{\piffour/4} \frac{\diff t}{\tanf(t)}+\int_{\piffour/4}^x \frac{(1-\tanf'(t))\diff t}{\tanf(t)}\right).
\\
		\end{eqnarray*}
Now, using Eq.~\eqref{eq:tanf_der}  it follows that 		
	\begin{eqnarray*}
	\cosf(x)&=&
	\exp\left(\int_{\piftwo/2}^{\piffour/4} \frac{\diff t}{\tanf(t)}-\int_{\piffour/4}^x \frac{\chi^2(\tanf(t))\diff t}{\tanf(t)}\right).
\end{eqnarray*}	
We perform the change of variable $s=\tanf(t)$ and then $\diff s=(1+\chi^2(\tanf(t))\diff t$. Then, recalling that $\tanf(\piffour/4)=1$ and $\tanf(\piftwo/2)=\infty,$  it follows that 

		\begin{eqnarray*}
		\cosf(x)&=&
		\exp\left(-\int_{1}^{\infty}\frac{\diff s}{s(1+\chi^2(s))}-\int_{1}^{\tanf (x)} \frac{\chi^2(s)\diff s}{s(1+\chi^2(s))}\right),
	\end{eqnarray*}	
which leads  to  Eq.~\eqref{eq:cosin_alt_exp} and the right hand of Eq.~\eqref{eq:sin0cos0}, concluding the proof.
\end{proof}

\subsection{The derivative of the $\chi$-cosine function}
Let us now compute the derivative of the $\cosf$ function. 

\begin{prop}\label{prop:cosf_der} The derivative of the $\chi$-cosine function is given by
	\begin{equation}\label{eq:cosf_der}
		\cosf' (x)=-\frac{\cosf^2(x)}{\sinf(x)}\chi^2\left(\tanf(x)\right), \quad x\in \left(0,\frac{\piftwo}{2}\right)\, ,
	\end{equation}
	which means that $\cosf$ is a decreasing function on $\Omega_{\chi}$ and also that $\sin_{\chi}$ is concave on $\Omega_{\chi}$ .
\end{prop}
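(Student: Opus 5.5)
The plan is to differentiate the quotient representation $\cosf=\sinf/\tanf$ from Eq.~\eqref{eq:cosf_def1}, using the two derivative rules already available. These are $\sinf'=\cosf$ (by definition) and $\tanf'=1+\chi^2(\tanf)$ from Eq.~\eqref{eq:tanf_der}. On $\left(0,\piftwo/2\right)$ we have $0<\tanf(x)<\infty$ and $\sinf(x)>0$, the latter because $\sinf$ is an exponential of a finite integral there. So the quotient rule applies without any singularity, and we get
\begin{equation*}
\cosf'(x)=\frac{\sinf'(x)\tanf(x)-\sinf(x)\tanf'(x)}{\tanf^2(x)}=\frac{\cosf(x)\tanf(x)-\sinf(x)\left(1+\chi^2(\tanf(x))\right)}{\tanf^2(x)}.
\end{equation*}

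Next, substitute $\sinf=\cosf\,\tanf$ in the numerator. The term $\cosf\,\tanf$ cancels, leaving
\begin{equation*}
\cosf'(x)=-\frac{\cosf(x)\,\chi^2(\tanf(x))}{\tanf(x)}.
\end{equation*}
Writing $1/\tanf=\cosf/\sinf$ then gives Eq.~\eqref{eq:cosf_der}.

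As a cross-check, one could instead differentiate the exponential representation Eq.~\eqref{eq:cosin_alt_exp}. The chain rule gives $\cosf'=-\cosf\cdot\frac{\chi^2(\tanf)}{\tanf(1+\chi^2(\tanf))}\tanf'$, and the factor $1+\chi^2$ cancels against $\tanf'$, yielding the same formula.

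For the monotonicity and concavity claims, note that on $\Omega_\chi$ the interior points satisfy $\cosf>0$ and $\sinf>0$ (both are exponentials of finite integrals), and $\chi^2\geq0$. Hence $\cosf'\leq0$, strictly negative wherever $\chi(\tanf(x))>0$; since $\chi$ is positive on $\mathbb{R}^+$, this holds on the whole open interval. So $\cosf$ is decreasing, and since $\sinf''=\cosf'\le0$, $\sinf$ is concave.

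There is no real obstacle here. The only point needing care is justifying differentiability and strict positivity on the open interval, which also explains why the endpoints are excluded: at $0$, $\sinf(0)=0$, and at $\piftwo/2$, $\tanf=\infty$.
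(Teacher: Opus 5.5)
Your proof is correct and is essentially the paper's argument. The paper differentiates $\sinf=\tanf\,\cosf$ with the product rule and uses $\tanf'=1+\chi^2(\tanf)$ to obtain $0=\chi^2(\tanf)\cosf+\tanf\,\cosf'$; your quotient-rule computation on $\cosf=\sinf/\tanf$ is the same calculation, with the minor advantage that it derives the differentiability of $\cosf$ rather than assuming it.
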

\begin{proof}
	
We start by noting that $\sinf(x)=\tanf (x)\cosf(x)$, and then, using  Eq.~\eqref{eq:tanf_der} and Eq.~\eqref{eq:scsat}  it follows that
$$
\cosf(x)=\tanf' (x)\cosf(x)+\tanf(x)\cosf' (x)=\left(1+\chi^2\left(\tanf(x)\right)\right)\cosf(x)+\tanf(x)\cosf' (x),
$$
that is 
$$
0=\chi^2\left(\tanf(x)\right)\cosf(x)+\tanf(x)\cosf' (x)
$$
from which Eq.\eqref{eq:cosf_der} easily follows.

\end{proof}

	\begin{remark} [Two corollaries of Proposition~\ref{prop:cosf_der}]
		Let us start by adopting the notation $y(x)=\sinf(x)$. Then, on one hand Eq.~\eqref{eq:cosf_der} can be written as 
		\begin{equation}
			\frac{y''y}{y'^2} = -\chi^2\left(\frac {y}{y'}\right),
		\end{equation}
		which is an autonomous equation characterizing the $\sinf$ function.
		
		Meanwhile, on the other hand, Eq.~\eqref{eq:cosf_der} can also be written as
		
		\begin{equation}\label{eq:EDO_na}
			\frac{y''y}{y'^2} = -\chi^2\left(\tanf(x)\right).
		\end{equation}
		
		Taking into account that the latter equation is valid for an arbitrary increasing function $\chi$,  it can be chosen in such a way that
		\begin{equation}\label{eq:fg}
			-\chi^2\left(\tanf(x)\right)=g(x),
		\end{equation}
		for an arbitrary and decreasing function $g$. Then, Eq.~\eqref{eq:EDO_na} can be rewritten as 
		\begin{equation}\label{eq:EDO_g}
			\frac{y''y}{y'^2} = g(x),
		\end{equation}
		whose solution is given by $y(x)=\sinf(x)$, subject to~\eqref{eq:fg}.  Using the derivative of the $\tanf$ function as given in Eq.~\eqref{eq:tanf_der}, from~\eqref{eq:fg} it follows that
		\begin{equation}
			\tanf'(x)=1-g(x),
		\end{equation} 
		that is
		\begin{equation}
			\tanf(x)=x-\int g(x)\diff x.
		\end{equation}
		To continue, we note that 
		\begin{equation}
			\frac1{\tanf(x)}=\left(\log(\sinf(x))\right)'  =\frac1{x-\int g(x)\diff x}.
		\end{equation} 
		Finally, we obtain that 
		\begin{equation}
			\sinf(x)=\exp\left(\int\frac{\diff x}{x-\int g(x)\diff x}\right).
		\end{equation}
		Since the solution of Eq.~\eqref{eq:EDO_g} is precisely given by $y(x)=\sinf(x)$, we have then proved that the solution to Eq.~\eqref{eq:EDO_g} is given by
		
		\begin{equation}
			y(x)=\exp\left(\int\frac{\diff x}{x-\int g(x)\diff x}\right).
		\end{equation}
		
	\end{remark}

To conclude this section we highlight the relation with the Lindqvist-type $p$-sine and  $p$-cosine functions

\begin{remark}\label{remark:q-case2} 
	
		Now, we highlight again the case $\chi_{p}=t^{p/2}$. We proceed with the computation of $\sin_{\chi_{p}}$ using Eq.~\eqref{eq:sine_def}:
		\begin{align*}
				\sin_{\chi_{p}}(x) &= \exp\left(\int_{\piftwo/2}^x \frac{\diff t}{\tan_{\chi_{p}}(t)}\right) =  \exp\left(\int_{\pi_p/2}^x \frac{\diff t}{\tan_{p}(t)}\right) = \exp\left(\int_{\pi_p/2}^x \frac{\cos_p (t)}{\sin_{p}(t)}\, dt\right) \\[0.5em]
				& = \exp\left(\int_{\pi_p/2}^x \frac{(\sin_p (t))'}{\sin_{p}(t)}\, dt\right) = \exp\left(\left[\log \sin_p(t)\right]_{\pi_p/2}^{x} \right) = \exp\left(\log \sin_p(x) \right) \\[0.5em]
				& = \sin_p(x)\, , \quad x\in \left[0, \frac{\pi_p}{2}\right].
			\end{align*}
		Then, $\cos_{\chi_{p}}$ is given by
		
			$$
				\cos_{\chi_{p}}(x) = \frac{\sin_{\chi_{p}}(x)}{\tan_{\chi_{p}}(x)}=\frac{\sin_{p}(x)}{\tan_{p}(x)}=\cos_p(x).
			$$
		
\end{remark}

\section{The $\chi$-pythagorean relations and the $\chi$-arcsine and $\chi$-arccos functions}
In this section we show the correspoding $\chi$-Pythagorean-like relations for the $\chi$-trigonometric functions. To do so, we use a complementary deformation function $\overline \chi$ which allow us to express the relation between $\chi$-sine and $\chi$-cosine functions in a very simple way. We also give an alternative expression of the $\chi$-Pythagorean-like relations through a second complementary deformation function $\widetilde \chi.$  We will use them  later to obtain the expresions for the corresponding $\chi$-arcsine and $\chi$-arccosine functions. 
\subsection{The $\chi$-pythagorean relations}
We begin by giving the first version of the $\chi$-Pythagorean relation.
\begin{teorema}
Given $\chi$ satisfying the condition \eqref{eq:cond_pi4}, then  
\begin{equation}\label{eq:gen_pyth_rel1}
	\log\left(\frac{\sinf(x)}{\cosf\left(\frac\piffour4\right)}\right)=\log_{\overline{\chi}}\left(\frac{\sinf(x)}{\cosf(x)}\right), \quad x\in \overline{\Omega}_{\chi}\, ,
\end{equation}
where 
 $\overline \chi(s)=s(1+\chi^2(s))$, which is a positive increasing function for $0\leqslant s \leqslant x$ assuming that $\chi$ is increasing in $(0,\infty)$.
\end{teorema}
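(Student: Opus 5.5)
The identity is a direct computation from the integral representation of $\sinf$ obtained by the substitution $s=\tanf(t)$, namely
\[
\sinf(x)=\exp\left(\int_\infty^{\tanf(x)}\frac{\diff s}{s(1+\chi^2(s))}\right),
\]
combined with the value $\cosf(\piffour/4)=\mathcal K_\chi$ from Eq.~\eqref{eq:cospi4}. Condition~\eqref{eq:cond_pi4} guarantees that $\mathcal K_\chi>0$, so the left-hand side of~\eqref{eq:gen_pyth_rel1} is well defined.

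The first step is to take logarithms. Splitting the integral at $s=1$ gives
\[
\log\sinf(x)=-\int_1^\infty\frac{\diff s}{s(1+\chi^2(s))}+\int_1^{\tanf(x)}\frac{\diff s}{s(1+\chi^2(s))}.
\]
The first term is exactly $\log\mathcal K_\chi=\log\cosf(\piffour/4)$. Moving it to the left therefore yields
\[
\log\left(\frac{\sinf(x)}{\cosf(\piffour/4)}\right)=\int_1^{\tanf(x)}\frac{\diff s}{s(1+\chi^2(s))}.
\]

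The second step is to recognise the right-hand side as a deformed logarithm. With $\overline\chi(s)=s(1+\chi^2(s))$, the definition~\eqref{eq:chi_log} gives precisely $\log_{\overline\chi}(\tanf(x))$. By~\eqref{eq:cosf_def1}, $\tanf(x)=\sinf(x)/\cosf(x)$, which is the claimed identity.

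To justify using~\eqref{eq:chi_log}, note that $\overline\chi$ is positive and increasing on $(0,\infty)$, being a product of the increasing positive factors $s$ and $1+\chi^2(s)$ when $\chi\geq0$ is increasing.

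The only delicate point is the endpoints of $\overline{\Omega}_\chi$.
\begin{itemize}
\item At $x=\piftwo/2$ we have $\tanf=\infty$ and $\cosf=0$. Both sides equal $\log(1/\mathcal K_\chi)$, finite by~\eqref{eq:cond_pi4}, once $\log_{\overline\chi}(\infty)$ is read as the convergent improper integral.
\item At $x=0$, under the first condition in~\eqref{eq:conditions}, both sides are $-\infty$. Otherwise the equality holds as an identity of extended reals or by continuity.
\end{itemize}
The bulk of the argument is simply the change of variables already performed in the preceding remark.
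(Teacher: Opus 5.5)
Your argument is correct and is essentially the paper's proof. The paper splits $\int_{\piftwo/2}^x \diff t/\tanf(t)$ at $t=\piffour/4$ and then substitutes $s=\tanf(t)$; you substitute first (via the preceding remark) and split at $s=1=\tanf(\piffour/4)$, which is the same computation in the other order. Your endpoint discussion, which the paper omits, is fine. The ``otherwise'' case at $x=0$ is in fact vacuous: a positive increasing $\chi$ is bounded on $[0,1]$, so $\int_0^1 \frac{\diff s}{s(1+\chi^2(s))}=\infty$ holds automatically.
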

\begin{proof}
	We start by noting that, assuming condition~\eqref{eq:cond_pi4}, the generalized $\chi$-sine function can be expressed as
	\begin{eqnarray}
		\sinf(x)&=&\exp\left(\int_{\piftwo/2}^x \frac{\diff t}{\tanf(t)}\right)=\exp\left(\int_{\piftwo/2}^{\piffour/4} \frac{\diff t}{\tanf(t)}\right)\exp\left(\int_{\piffour/4}^x \frac{\diff t}{\tanf(t)}\right)
		\\
		&=&\sinf\left(\frac\piffour4\right)\exp\left(\int_{\piffour/4}^x \frac{\diff t}{\tanf(t)}\right), \quad x\in \overline{\Omega}_{\chi}.
	\end{eqnarray}
	
	The change of variable $s=\tanf(t)$ gives
	\begin{equation}
		\sinf(x)=\sinf\left(\frac\piffour4\right)\exp\left(\int_1^{\tanf(x)} \frac{\diff s}{s(1+\chi(s)^2)}\right),
	\end{equation}
	which can be written as
	\begin{equation}
		\sinf(x)=\sinf\left(\frac\piffour4\right)\exp\left(\log_{\overline{\chi}}\left(\tanf(x)\right)\right),
	\end{equation}
	where $\log_\chi$ is defined in Eq.~\eqref{eq:chi_log}. Finally, condition~\eqref{eq:cond_pi4} ensures that $\sinf\left(\frac\piffour4\right)>0$, and Eq.~\eqref{eq:gen_pyth_rel1} follows after straightforward manipulations, and after interchanging $\sinf(\piffour/4)$ and $\cosf(\piffour/4)$ (as stated in Eq.~\eqref{eq:cospi4}), concluding the proof.
\end{proof}

\begin{remark}	
		The latter equation can be seen as a generalized pythagorean relation for the generalized $\chi$-trigonometric functions. Indeed, letting $\chi(s)=s$, for which choice which these functions reduce to the standard trigonometric ones, and for $x\in \left[0,\frac{\pi}{2}\right]$, Eq.~\eqref{eq:gen_pyth_rel1} becomes
	\begin{equation}
		\log\left(\frac{\sin(x)}{\cos\left(\frac\pi4\right)}\right)=\log_{\overline{\chi}}\left(\frac{\sin(x)}{\cos(x)}\right),\quad {\rm with}\quad \overline \chi(s)=s(1+s^2).
	\end{equation}
	Moreover, straightforward computations leads to 
	$$\log_{\overline{\chi}}(s)=\int_1^s \frac{\diff t}{t(1+t^2)}=\log s-\frac 12\log\left(1+s^2\right)+\frac 12\log2,$$
	that is
	$$\log_{\overline{\chi}}(\tan x)=\log (\tan x)-\frac 12\log\left(\frac{\sin^2(x)+\cos^2(x)}{\cos^2(x)}\right)+\frac 12\log2,$$
	and after some algebriac manipulations it follows that
	$$
	\cos^2(x)+\sin^2(x)=\left(\sqrt 2\cos(\pi/4)\right)^2={\rm cte}.
	$$
	Furthermore, since in the general case $\sinf(\piftwo/2)=1$  and $\cosf(\piftwo/2)=0,$  one necessarily obtains that $\cos(\pi/4)=1/\sqrt 2$ and 
	$$
	\cos^2(x)+\sin^2(x)=1.
	$$
\end{remark}

We continue by obtaining the following alternative expression for the $\chi$-pythagorean relation in Eq.~\eqref{eq:gen_pyth_rel1}. 
\begin{teorema} Let $\chi$ a positive and increasing function satisfying the conditions in Eq.~\eqref{eq:conditions}. Then, 
	\begin{equation}\label{eq:gen_pyth_rel2}
		\log_{\widetilde \chi}\left(\frac{\sinf(x)}{\cosf(x)}\right)=\log\left(\frac{\sinf\left(\frac\piffour4\right)}{\cosf(x)}\right) \, ,\quad x\in \overline{\Omega}_{\chi}\, ,
	\end{equation}
	where 
	\begin{equation}\label{eq:chi_tilde}
		\widetilde{\chi}(s)=s\left(1+\frac1{\chi^{2}(s)}\right),
	\end{equation}
	assuming that $\widetilde{\chi}$ is a positive function for $0<s\leqslant x.$ 
\end{teorema}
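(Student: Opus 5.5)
The plan is to start from the closed expression for the $\chi$-cosine obtained in the Proposition preceding Proposition~\ref{prop:cosf_der}, namely Eq.~\eqref{eq:cosin_alt_exp}:
\[
\cosf(x)=\mathcal K_\chi \exp\left(-\int_{1}^{\tanf (x)} \frac{\chi^2(s)\diff s}{s(1+\chi^2(s))}\right),
\]
where $\mathcal K_\chi=\sinf(\piffour/4)=\cosf(\piffour/4)$ by Eq.~\eqref{eq:cospi4}. This expression is valid under the conditions~\eqref{eq:conditions}. The second condition in~\eqref{eq:conditions} guarantees $\mathcal K_\chi>0$, so logarithms can be taken. Dividing and taking logarithms gives
\[
\log\left(\frac{\sinf\left(\frac\piffour4\right)}{\cosf(x)}\right)=\int_{1}^{\tanf (x)} \frac{\chi^2(s)\diff s}{s(1+\chi^2(s))}.
\]

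The key observation is an algebraic rewriting of the integrand:
\[
\frac{\chi^2(s)}{s(1+\chi^2(s))}=\frac{1}{s\left(1+\chi^{-2}(s)\right)}=\frac1{\widetilde\chi(s)},
\]
with $\widetilde\chi$ as in Eq.~\eqref{eq:chi_tilde}. By the definition of the deformed logarithm in Eq.~\eqref{eq:chi_log}, the right-hand side is therefore exactly $\log_{\widetilde\chi}(\tanf(x))$. Finally, Eq.~\eqref{eq:scsat} gives $\tanf(x)=\sinf(x)/\cosf(x)$, which yields Eq.~\eqref{eq:gen_pyth_rel2}.

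The main point needing care is the endpoints of $\overline{\Omega}_\chi$, not the algebra.
\begin{itemize}
\item At $x=0$ we have $\tanf(0)=0$. The integral $\int_0^1 \chi^2/(s(1+\chi^2))\diff s$ is finite by the third condition in~\eqref{eq:conditions}, so $\log_{\widetilde\chi}(0)$ is a finite number. It matches $\log(\mathcal K_\chi/\cosf(0))$, by the value of $\cosf(0)$ in Eq.~\eqref{eq:sin0cos0}.
\item At $x=\piftwo/2$ both sides equal $+\infty$, consistent with $\cosf(\piftwo/2)=0$. The identity should be read there as an equality of extended reals, or as a limit.
\end{itemize}
The hypothesis that $\widetilde\chi$ is positive is automatic wherever $\chi>0$. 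It only fails to be meaningful at $s=0$, where $\chi(0)=0$, and this does not affect the integral.
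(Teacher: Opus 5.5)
Your proof is correct and follows the paper's route: the paper likewise rewrites Eq.~\eqref{eq:cosin_alt_exp} as $\cosf(x)=\sinf(\piffour/4)\exp\bigl(-\int_1^{\tanf(x)}\chi^2(s)\diff s/(s(1+\chi^2(s)))\bigr)$ and concludes from the definition \eqref{eq:chi_log} of the deformed logarithm. The paper says only that the result ``trivially follows''; your identity $\chi^2(s)/(s(1+\chi^2(s)))=1/\widetilde\chi(s)$ and your checks at $x=0$ and $x=\piftwo/2$ spell out what it leaves unsaid.
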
	  
\begin{proof}
First, we note that, recalling Eqs.~\eqref{eq:cospi4} and~\eqref{eq:cosin_alt_exp}, the $\chi$-cosine function can be written as
	\begin{equation}
		\cosf(x)=\sin\left(\frac\piffour4\right)
		\exp\left(-\int_{\piffour/4}^x \frac{\chi^2(\tanf(t))\diff t}{\tanf(t)}\right),
	\end{equation}
	or equivalently,
	\begin{equation}
		\cosf(x)=\sin\left(\frac\piffour4\right)
		\exp\left(-\int_{1}^{\tanf (x)} \frac{\chi^2(s)\diff s}{s(1+\chi^2(s))}\right).
	\end{equation}
The proof trivially follows by taking into account the definition of the $\chi$-logarithm function in Eq.~\eqref{eq:chi_log}.
\end{proof}
\begin{remark}
		We now let $\chi_p(s)=s^{p/2},$ with $p>1$, for which choice the functions $\sin_\chi$ and $\cos_{\chi}$ reduce to the generalized $p$-trigonometric functions $\sin_p$ and $\cos_p$, as dicussed in Remarks~\ref{remark:q-case1} and~\ref{remark:q-case2}. Then, in order to apply the Pythagorean-like relation~\eqref{eq:gen_pyth_rel2} we first compute  $\widetilde \chi(s)=s(1+s^{-p})=\frac{s^p+1}{s^{p-1}},$ and so
	$$\log_{\widetilde\chi}(x)=\int_1^x\frac{s^{p-1}\diff s}{s^p+1}=\frac 1p \log (s^p+1)\Bigg|_1^x=\frac 1p \log (x^p+1)-\frac{\log2}{p}.$$
	From where 
	$$
	\frac 1p \log \left(\frac{\sin^p_p(x)}{\cos^p_p(x)}+1\right)-\frac{\log2}{p}=\log\left(\frac{\sin_p\left(\frac{\pi^\angle_p}4\right)}{\cos_p(x)}\right) 
	$$
	or equivalently
	$$
	\log \left(\frac{\sin^p_p(x)+\cos^p_p(x)}{\cos^p_p(x)}\right)=\log\left(2\,\frac{\sin^p_p\left(\frac{\pi^\angle_p}4\right)}{\cos^p_p(x)}\right)
	$$
	i.e.
	$$
	\sin^p_p(x)+\cos^p_p(x)=2\,\sin^p_p\left(\frac{\pi^\angle_p}4\right)={\rm cte}.$$
	In addition 
	$$
	\sin_{p}^{p}(\pi^\perp_p/2)+\cos_{p}^{p}(\pi^\perp_p/2)=1+0=1.
	$$
	
	That is, we recover the generalized pythagorian relation
	$$\sin_p^p(x)+\cos_p^p(x)=1,$$
	which includes the case $p=2$ as the classical identity
	$$\sin^2(x)+\cos^2(x)=1.$$
	In addition we also obtain 
	$$\sin_p\left(\frac{\pi^\angle_p}4\right)=\frac1{\sqrt[p]2}.$$
\end{remark}

\subsection{The $\chi$-arcsine and $\chi$-arccosine functions}
\begin{teorema} Assuming that the conditions~\eqref{eq:conditions} hold, the inverse function of the $\chi$-sine function is given by
	\begin{equation}\label{eq:arcsin}
		\arcsin_\chi(x)=\int_0^x\frac{\diff t}{\xi_{\chi}(t)}, \quad  0\leqslant x \leqslant 1\, ,
	\end{equation}
	where
	\begin{equation}\label{eq:xi_arcsin}
		\xi_{\chi}(t):=\frac{t}{\exp_{\overline\chi}\left(\log\left(\frac{t}{\mathcal K_\chi}\right)\right)},
	\end{equation}
	or equivalently,
\begin{equation}\label{eq:arcsin2}
	\arcsin_\chi(x)=\int_{-\infty}^{\log\left(\frac{x}{\mathcal K_\chi}\right)} \exp_{\overline\chi}\left(s\right)\diff s.
\end{equation}
\end{teorema}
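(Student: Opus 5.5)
We will use the first $\chi$-Pythagorean relation, Eq.~\eqref{eq:gen_pyth_rel1}, to write $\cosf$ as a function of $\sinf$, and then apply the inverse function rule to $\sinf$. Since $\sinf(0)=0$, $\sinf(\piftwo/2)=1$ and $\sinf$ is increasing on $\Omega_\chi$, the inverse $\arcsin_\chi:[0,1]\to\overline\Omega_\chi$ is well defined. It satisfies $\arcsin_\chi(0)=0$, so
\[
\arcsin_\chi(x)=\int_0^x\frac{\diff t}{\sinf'(\arcsin_\chi(t))}=\int_0^x\frac{\diff t}{\cosf(\arcsin_\chi(t))}.
\]
It remains to express $\cosf(\arcsin_\chi(t))$ through $t$ alone.

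Set $y=\arcsin_\chi(t)$, so that $\sinf(y)=t$. By Eq.~\eqref{eq:gen_pyth_rel1} and $\cosf(\piffour/4)=\mathcal K_\chi$ (Eq.~\eqref{eq:cospi4}, which is nonzero by the second condition in \eqref{eq:conditions}), we have $\log(t/\mathcal K_\chi)=\log_{\overline\chi}(t/\cosf(y))$. The function $\log_{\overline\chi}$ is strictly increasing on $(0,\infty)$ because $\overline\chi>0$, so it has an inverse $\exp_{\overline\chi}$. Hence
\[
\frac{t}{\cosf(y)}=\exp_{\overline\chi}\!\left(\log\frac{t}{\mathcal K_\chi}\right),\qquad \cosf(y)=\xi_\chi(t).
\]
Substituting this into the integral gives Eq.~\eqref{eq:arcsin}.

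For the second form we substitute $s=\log(t/\mathcal K_\chi)$, so $t=\mathcal K_\chi e^s$ and $\diff t=t\,\diff s$. Then $\diff t/\xi_\chi(t)=\exp_{\overline\chi}(s)\diff s$. As $t\to0^+$ we get $s\to-\infty$, which yields Eq.~\eqref{eq:arcsin2}.

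The main obstacle is making the range of $\exp_{\overline\chi}$ precise and checking the endpoints. The range of $\log_{\overline\chi}$ is $(-\infty, L)$, where $L=\int_1^\infty \diff s/(s(1+\chi^2(s)))$. It tends to $-\infty$ at $0$ because of the first condition in \eqref{eq:conditions}, and $L$ is finite by the second. Since $\mathcal K_\chi=e^{-L}$, we get $\log(t/\mathcal K_\chi)=\log t+L\le L$ for $t\le1$, so $\exp_{\overline\chi}$ is only evaluated on its range (with value $+\infty$ at $t=1$). This makes the integrand vanish at $t=1$; we must still check integrability there and at $0$.

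At $t=1$ the integral equals $\piftwo/2<\infty$, consistent with $\cosf(\piftwo/2)=0$. Convergence is most easily seen through \eqref{eq:arcsin2}, which reads as the monotone limit
\[
\int_{-\infty}^{L}\exp_{\overline\chi}(s)\diff s=\int_0^\infty \frac{u\,\diff u}{\overline\chi(u)}=\int_0^\infty\frac{\diff u}{1+\chi^2(u)}=\arctanf(\infty),
\]
obtained by changing variable $u=\exp_{\overline\chi}(s)$. This identity also gives an independent check of the result. Near $t=0$, $\cosf(0)$ is finite and positive by the third condition, so the integrand is bounded there.
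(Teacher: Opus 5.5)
Your proposal is correct and follows the paper's proof. You invert the first $\chi$-Pythagorean relation to get $\cosf=\xi_\chi(\sinf)$, apply the inverse function rule, and substitute $s=\log(t/\mathcal K_\chi)$; your endpoint check via $u=\exp_{\overline\chi}(s)$ is the same computation the paper does in the remark after the theorem. One quibble: $\piftwo<\infty$ does not follow from \eqref{eq:conditions}. For instance, $\chi(s)=\log(1+s)$ satisfies all three conditions but gives $\int_0^\infty \diff u/(1+\chi^2(u))=\infty$. So your finiteness claim at $t=1$ needs the extra hypothesis $\piftwo<\infty$; without it, $\arcsin_\chi$ is defined on $[0,1)$ and both sides are $+\infty$ at $x=1$, which is how the paper's statement tacitly reads.
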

\begin{proof}
	The $\chi$-generalized Pythagorean-like relation~\eqref{eq:gen_pyth_rel1},
\begin{equation}
	\log\left(\frac{\sinf(x)}{\cosf\left(\frac\piffour4\right)}\right)=\log_{\overline{\chi}}\left(\frac{\sinf(x)}{\cosf(x)}\right),
\end{equation}
gives
\begin{equation}\label{3.15}
	\cosf(x)=\frac{\sinf(x)}{\exp_{\overline\chi}\left(\log\left(\frac{\sinf(x)}{\mathcal K_\chi}\right)\right)},
\end{equation}
where $\mathcal K_\chi=\cosf\left(\frac\piffour4\right),$ as defined in \eqref{eq:cospi4}. This equation can be written as
\begin{equation}\label{eq:cos_xi_sin}
	\cosf(x)=\xi_\chi(\sinf(x)).
\end{equation}
	where $\xi_{\chi}$ is defined in~\eqref{eq:xi_arcsin}. From the latter equation, using the inverse function theorem we can deduce that
\begin{equation}
	\arcsin_\chi(x)=\int_0^x\frac{\diff t}{\xi_\chi(t)}=\int_0^x\frac{\diff t}{t}\exp_{\overline\chi}\left(\log\left(\frac{t}{\mathcal K_\chi}\right)\right), \quad 0\leq x \leq 1.
\end{equation}
Finally, making the change of variable $s=\log\left(\frac{t}{\mathcal K_\chi}\right)$, Eq.~\eqref{eq:arcsin2} trivially follows. 
\end{proof}

\begin{remark}

	From Eqs.~\eqref{eq:arcsin} and \eqref{eq:arcsin2} it is straightforward to see that $\arcsin_\chi(0) =0$.  We now show that $\arcsin_\chi(1) = \frac{\piftwo}{2}$. We have
		\begin{equation}\label{eq:arcsinchi11}
			\arcsin_\chi(1) = \int_0^1 \frac{\diff t}{\xi_{\chi}(t)} = \int_0^1 \frac{\exp_{\overline\chi}\left(\log\left(\frac{t}{\mathcal K_\chi}\right)\right) }{t}\, dt = \int_{-\infty}^{-\log \mathcal K_\chi} \exp_{\overline\chi}\left(s\right) \, ds \,,
		\end{equation}
		where, for the third equality we have used the change of variable $s=\log\left(\frac{t}{\mathcal K_\chi}\right)$. Now, using the fact that $\exp_{\overline\chi} = \log_{\overline\chi}^{-1}$ and the inverse function theorem, we have that
		\[
		\exp_{\overline\chi}'(s) = \frac{1}{\log_{\overline \chi}'(\exp_{\overline\chi}(s)) } = \overline \chi(\exp_{\overline\chi}(s)) \, .
		\]
		Then, making the following change of variable in the last integral in \eqref{eq:arcsinchi11},
		\begin{align*}
			& u = \exp_{\overline\chi}(s), \qquad du = \exp_{\overline\chi}'(s)\, ds = \overline \chi(\exp_{\overline\chi}(s))\,ds, \quad ds = \frac{1}{\overline\chi(u)}\, du\, , \\
			& u = \exp_{\overline\chi}(-\infty) = 0, \qquad u = \exp_{\overline\chi}(-\log \mathcal K_\chi) = \exp_{\overline\chi}\left(\int_{1}^{\infty} \frac{1}{s(1+\chi^2(s))}\, ds \right) = \exp_{\overline\chi}\left(\log_{\overline\chi}(\infty) \right) = \infty\, ,
		\end{align*}
		we see that
		\begin{equation}\label{eq:arcsinchi12}
			\arcsin_\chi(1) = \int_{0}^{\infty} \frac{u}{\overline\chi(u)}\, du = \int_{0}^{\infty} \frac{1}{1+\chi^2(u)}\, du = \arctanf(\infty) = \frac{\piftwo}{2}\, .
		\end{equation}

\end{remark}

\begin{teorema} Assuming that the conditions~\eqref{eq:conditions} hold, the inverse function of the $\chi$-cosine function is given by
\begin{equation}\label{eq:arccos}
	\arccos_\chi(x)=\frac\piftwo2+\int_\infty^{\log(\mathcal K_\chi/x)} \frac{\exp_{\widetilde \chi}(s) }{\chi^2\left(\exp_{\widetilde \chi}(s)\right)}\diff s, \quad 0\leqslant x \leqslant \cosf(0)
\end{equation}
where $\widetilde{\chi}$ is defined in Eq.~\eqref{eq:chi_tilde}, provided that $\piftwo<\infty$. Alternatively,
\begin{equation}\label{eq:arccos2}
	\arccos_\chi(x)=\frac\piftwo2+\int_\infty^{\log(\mathcal K_\chi/x)} \left(\widetilde\chi\left(\exp_{\widetilde \chi}(s)\right)-\exp_{\widetilde \chi}(s)\right)\diff s\, .
\end{equation}
\end{teorema}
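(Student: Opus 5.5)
The plan mirrors the proof for $\arcsin_\chi$. The starting point is the second Pythagorean-like relation \eqref{eq:gen_pyth_rel2}, which ties $\tanf(x)$ to $\cosf(x)$. Since $\sinf(\piffour/4)=\mathcal K_\chi$ by \eqref{eq:cospi4}, the relation reads
\[
\log_{\widetilde\chi}\bigl(\tanf(x)\bigr)=\log\left(\frac{\mathcal K_\chi}{\cosf(x)}\right).
\]
Because $\widetilde\chi>0$, $\log_{\widetilde\chi}$ is increasing and invertible, so
\[
\tanf(x)=\exp_{\widetilde\chi}\bigl(\log(\mathcal K_\chi/\cosf(x))\bigr).
\]
Next I would rewrite the derivative of $\cosf$ from Proposition~\ref{prop:cosf_der} as a function of $\cosf$ alone. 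Using $\cosf/\sinf=1/\tanf$, it becomes
\[
\cosf'(x)=-\cosf(x)\,\frac{\chi^2(\tanf(x))}{\tanf(x)}.
\]
Substituting the expression for $\tanf$ gives $\cosf'=-\eta_\chi(\cosf)$, where
\[
\eta_\chi(y)=y\,\frac{\chi^2(T(y))}{T(y)},\qquad T(y)=\exp_{\widetilde\chi}(\log(\mathcal K_\chi/y)).
\]

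The function $\cosf$ is strictly decreasing on $\Omega_\chi$, maps $[0,\piftwo/2]$ onto $[0,\cosf(0)]$, and satisfies $\cosf(\piftwo/2)=0$. By the inverse function theorem, $\arccos_\chi'(y)=-1/\eta_\chi(y)$. Integrating from $0$, where $\arccos_\chi(0)=\piftwo/2$ (this needs $\piftwo<\infty$), gives
\[
\arccos_\chi(x)=\frac{\piftwo}{2}-\int_0^x\frac{T(y)\,\diff y}{y\,\chi^2(T(y))}.
\]
Now substitute $s=\log(\mathcal K_\chi/y)$, so $\diff y/y=-\diff s$. The limit $y=0$ becomes $s=\infty$ and $y=x$ becomes $s=\log(\mathcal K_\chi/x)$. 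The minus signs combine to give exactly \eqref{eq:arccos} with integrand $\exp_{\widetilde\chi}(s)/\chi^2(\exp_{\widetilde\chi}(s))$.

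For \eqref{eq:arccos2}, it suffices to note the algebraic identity
\[
\widetilde\chi(u)-u=\frac{u}{\chi^2(u)}.
\]
This follows directly from \eqref{eq:chi_tilde}, and setting $u=\exp_{\widetilde\chi}(s)$ turns the first integrand into the second.

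The main obstacle is justifying convergence and the endpoint behaviour. I need $\arccos_\chi(0)=\piftwo/2$ to be legitimate, i.e. the improper integral near $y=0$ ($s\to\infty$) must converge to the correct value. To check this, I would substitute $u=\exp_{\widetilde\chi}(s)$, so $\diff s=\diff u/\widetilde\chi(u)$, as in the remark after the $\arcsin_\chi$ theorem. The integral then becomes
\[
\int \frac{u}{\chi^2(u)\,\widetilde\chi(u)}\,\diff u=\int\frac{\diff u}{1+\chi^2(u)}.
\]
Taking $s$ from $\log(\mathcal K_\chi/x)$ to $\infty$ corresponds to $u$ from $\tanf(\arccos_\chi(x))$ to $\tanf(\piftwo/2)=\infty$. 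This reproduces $\arctanf(\infty)-\arctanf(\tanf(\cdot))$, which is finite exactly when $\piftwo<\infty$. This consistency check confirms both the formula and the value at $x=\cosf(0)$, where $\tanf=0$ gives $\arccos_\chi(\cosf(0))=0$. For that last step I would use the third condition in \eqref{eq:conditions} to ensure $\cosf(0)$ is finite, so that $s=\log(\mathcal K_\chi/\cosf(0))$ is the correct lower value with $\exp_{\widetilde\chi}(s)=0$.
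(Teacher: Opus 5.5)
Your proposal is correct and follows essentially the same route as the paper. Both arguments express $\cosf'$ as a function of $\cosf$ alone via \eqref{eq:gen_pyth_rel2}, apply the inverse function theorem anchored at $\cosf(\piftwo/2)=0$, substitute $s=\log(\mathcal K_\chi/z)$, and use $\widetilde\chi(u)-u=u/\chi^2(u)$ to get the second form. Your convergence and endpoint check via $u=\exp_{\widetilde\chi}(s)$ is the same computation the paper gives in the remark after the theorem, where it verifies $\arccos_\chi(\cosf(0))=0$.
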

\begin{proof}
	From Eq.~\eqref{eq:gen_pyth_rel2} it follows that
\begin{equation}
	\sinf(x)=\cosf(x)\,\exp_{\widetilde \chi}\left(\log\left(\frac{\sinf\left(\frac\piffour4\right)}{\cosf(x)}\right)\right).
\end{equation}
Then using Eq.~\eqref{eq:cosf_der},
\begin{eqnarray}
	\cosf' (x)&=&-\frac{\cosf^2(x)}{\sinf(x)}\chi^2\left(\tanf(x)\right)
\\
&=&-\frac{\cosf(x)}{\exp_{\widetilde \chi}\left(log \mathcal K_\chi -\log(\cosf(x))\right)}\chi^2\left(\exp_{\widetilde \chi}\left(\log \mathcal K_\chi-\log(\cosf(x))\right)\right)
\end{eqnarray}
That is, we can write

\begin{equation}
	\cosf' (x)=\widetilde \Xi_\chi (\cosf(x)),
\end{equation}
with 
\begin{equation}
	\widetilde{\Xi}_\chi(z)=-\left(\frac{\chi^2\left(\exp_{\widetilde \chi}\left(\log(\mathcal K_\chi/z)\right)\right)}{\exp_{\widetilde \chi}\left(\log(\mathcal K_\chi/z)\right)}\right) z.
\end{equation}
From the latter equation, and the fact that $\cos(\piftwo/2)=0$, it follows that
\begin{equation}
	\arccos_\chi(x)=\frac\piftwo2+\int_0^x\frac{\diff z}{\widetilde{\Xi}_\chi(z)}=\frac\piftwo2-\int_0^x\frac{\diff z}{z}  \frac{\exp_{\widetilde \chi}\left(\log(\mathcal K_\chi/z)\right)}{\chi^2\left(\exp_{\widetilde \chi}\left(\log(\mathcal K_\chi/z)\right)\right)}.
\end{equation}
Now we apply the change of variable $s=\log\frac{\mathcal K_\chi}{z}$, $\diff s=-\frac{\diff z}z$, which leads to
\begin{equation}
	\arccos_\chi(x)=\frac\piftwo2+\int_\infty^{\log\frac{\mathcal K_\chi}{x}} \frac{\exp_{\widetilde \chi}(s) }{\chi^2\left(\exp_{\widetilde \chi}(s)\right)}\diff s 
	.
\end{equation}

Finally, using~\eqref{eq:chi_tilde} one can check that

\begin{equation}
	\frac{\exp_{\widetilde \chi}(s)}{\chi^2\left(\exp_{\widetilde \chi}(s)\right)}=\widetilde\chi\left(\exp_{\widetilde \chi}(s)\right)-\exp_{\widetilde \chi}(s).
\end{equation}
\end{proof}
\begin{remark}
	We now show that $\arccos_\chi(\cosf(0)) = 0$. We begin by noting that
		\begin{equation}\label{eq:arccos11}
			\hspace{-0.95cm}	\arccos_\chi(\cosf(0)) = \frac\piftwo2+\int_\infty^{\log(\mathcal K_\chi/\cosf(0))} \left(\widetilde\chi\left(\exp_{\widetilde \chi}(s)\right)-\exp_{\widetilde \chi}(s)\right)\diff s.
		\end{equation}
		Then, using the change of variable $u = \exp_{\overline\chi}(s)$ in the last integral in \eqref{eq:arccos11} and 
		\[
		\log \mathcal K_{\chi} - \log \cos_{\chi}(0) = - \int_{1}^{\infty} \frac{1}{s(1+\chi^2(s))}\, ds - \int_{0}^{1} \frac{\chi^2(s)}{s(1+\chi^2(s))}\, ds + \int_{1}^{\infty} \frac{1}{s(1+\chi^2(s))}\, ds = \int_{1}^{0} \frac{1}{\widetilde \chi(s)}\, ds = \log_{\widetilde \chi }( 0) \, ,
		\]
		we obtain
		\begin{equation}\label{eq:arccos12}
			\arccos_\chi(\cosf(0))=\frac\piftwo2+ \int_{\infty}^{0} \frac{\widetilde\chi (u)-u}{\widetilde\chi(u)}\, du = \frac\piftwo2- \int_{0}^{\infty} \frac{1}{1+\chi^2(u)}\, du =  \frac\piftwo2-\arctanf(\infty) =  \frac\piftwo2- \frac\piftwo2 = 0\, .
	\end{equation}
\end{remark}

\begin{remark}[The $\chi$-arcsine and $\chi$-arccosine functions vs the $\chi$-arctan function] We deduce now alternative expresions for the $\chi$-arcsine and $\chi$-arccosine functions by also involving the $\chi$-arctan function.

\begin{itemize}[]
\item \textbf{The $\chi$-arcsine  and $\chi$-arctan functions}
\end{itemize}

We start now with Eq.~\eqref{eq:cos_xi_sin}, 

\begin{equation}
	\cosf(\arcsin_\chi(x))=\xi_\chi(x),
\end{equation}
where $\xi_\chi$ is defined in Eq.~\eqref{eq:xi_arcsin}. Then
\begin{equation}
	\tanf(\arcsin_\chi(x))=\frac{\sinf(\arcsin_\chi(x))}{\cosf(\arcsin_\chi(x))}=\frac{x}{\xi_\chi(x)}=\exp_{\overline\chi}\left(\log\left(\frac{x}{\mathcal K_\chi}\right)\right).
\end{equation}
Finally, we have
\begin{equation}
	\arcsin_\chi(x)=\arctanf\left(\exp_{\overline\chi}\left(\log\left(\frac{x}{\mathcal K_\chi}\right)\right)\right)\, ,  \quad 0 \leqslant x \leqslant 1. 
\end{equation}

\begin{itemize}
\item \textbf{The $\chi$-arccosine and the $\chi$-arctan functions}
\end{itemize}	

	From Eq.~\eqref{eq:gen_pyth_rel2} it follows that 
\begin{equation}
	\sinf(x)=\cosf(x)\,\exp_{\widetilde \chi}\left(\log\left(\frac{\sinf\left(\frac\piffour4\right)}{\cosf(x)}\right)\right).
\end{equation}
That is, 
\begin{equation}
	\sinf(\arccos_\chi(x))=\Xi_\chi(x),
\end{equation}
where
\begin{equation}
	\Xi_\chi(x)=x\,\exp_{\widetilde \chi}\left(\log\left(\mathcal K_\chi/x\right)\right),\quad \mathcal K_\chi=\sinf\left(\frac\piffour4\right).
\end{equation}
Consequently,
\begin{equation}
	\tanf(\arccos_\chi(x))=\frac{\sinf(\arccos_\chi(x))}{\cosf(\arccos_\chi(x))}=\frac{\Xi_\chi(x)}x=\exp_{\widetilde \chi}\left(\log \left(\mathcal K_\chi/x\right)\right).
\end{equation}
We then obtain the following expression for  the $\chi$-arccos function
\begin{equation}\label{eq:arccosdef2}
	\arccos_\chi(x)=\arctanf\left(\exp_{\widetilde \chi}\left(\log \left(\frac{\mathcal K_\chi}x\right)\right)\right), \quad 0 \leqslant x \leqslant \cosf(0) .
	\end{equation}

\end{remark}

	\section{Some illustrative examples}

In this section we  give some examples   of the generalized trigonometric functions defined above. We obtain the particular perpendicular and oblique $\pi_\chi$ constants as well as the correspoding Pythagorean relations.
	
	\subsection{The case  $\chi(s):=\chi_{a,b}(s)=\sqrt{as^2+b}$}\label{ssec:ej1}

		As a useful toy example, we consider the function $\chi_{a,b}(s)=\sqrt{as^2+b}$ with $a,b>0$. We have that, for any $x\in[0,\infty)$,
		$$\arctan_{\chi_{a,b}}(x)=\int_0^x\frac{\diff s}{1+b+as^2}.$$
		That is, 
		$$\arctan_{\chi_{a,b}}(x)=\frac1{\sqrt{(1+b)a}} \arctan \left(\sqrt{\frac{a}{1+b}}\,x\right),$$
		and consequently,
		\begin{equation}\label{eq:tan_special_case}
			\tan_{\chi_{a,b}}(x)=\sqrt{\frac{1+b}a} \tan\left(\sqrt{(1+b)a}\,x\right)\, .
		\end{equation}
		In this case, the corresponding generalized  $\pi_\chi$ constants  are given by
		\begin{align*}
			\pi_{\chi_{a,b}}^\perp & =  2\arctan_{\chi_{a,b}}(\infty) 
			= \frac{\pi}{\sqrt{(1+b)a}} \, , \\[0.5em]
			\pi_{\chi_{a,b}}^\angle & = 4\arctan_{\chi_{a,b}}(1) = \frac4{\sqrt{(1+b)a}} \arctan \left(\sqrt{\frac{a}{1+b}}\right).
		\end{align*}
			
			 Note   that $\chi_{a,b}(-s) = \chi_{a,b}(s)$ and $\chi_{a,b}(s) \sim s$ as $s\to \infty$, which means that both $\arctan_{\chi_{a,b}}$ and $\tan_{\chi_{a,b}}$ are naturally odd

 Consequently, the $\sinf$ and $\cosf$ functions are given by
\begin{eqnarray}
\sin_{\chi_{a,b}}(x)
&=&\exp\left(\sqrt{\frac a{1+b}} \int_{ \frac{\pi}{2\sqrt{(1+b)a}}}^x\frac{\diff s}{\tan\left(\sqrt{(1+b)a}\,s\right)}\right)
\\
&=&\exp\left(\frac 1{1+b} \log\left(\sin\left(\sqrt{a(b+1)}\,x\right)\right)\right)=\left(\sin\left(\sqrt{a(b+1)}\,x\right)\right)^{\frac 1{1+b}}
\end{eqnarray}

and 
$$
\cos_{\chi_{a,b}}(x)=\sqrt{\frac a{b+1}}\frac{\cos\left(\sqrt{a(b+1)}\,x\right)}{\left(\sin\left(\sqrt{a(b+1)}\,x\right)\right)^{\frac {b}{1+b}}}\, .
$$

As can be observed, since $\chi(0)\neq 0$, $\cos_{\chi_{a,b}}(0) =\infty$, as discussed  following  conditions~\eqref{eq:conditions}. On the other hand, if $b=0$, i.e., $\chi_{a,0}(s) = \sqrt{a}s$ and, thus, $\chi_{a,0}(0)=0$, then,
\begin{equation}\label{eq:tan_special_case_b_0}
	\tan_{\chi_{a,0}}(x)=\sqrt{\frac{1}{a}} \tan\left(\sqrt{a}\,x\right) , \quad \sin_{\chi_{a,0}}(x) = \sin\left(\sqrt{a}\,x\right), \quad \text{and} \quad  \cos_{\chi_{a,0}}(x) = \sqrt{a}\cos(\sqrt{a}\, x)
\end{equation}
where $\cos_{\chi_{a,0}}(0) = \sqrt{a}\cos(\sqrt{a}\,0) = \sqrt{a}$. For completeness,
\begin{equation*}
	\pi_{\chi_{a,0}}^\perp  = \frac{\pi}{\sqrt{a}} \, , \qquad \text{and} \qquad \pi_{\chi_{a,0}}^\angle  = \frac4{\sqrt{a}} \arctan \left(\sqrt{a}\right).
\end{equation*}

	We now derive the Pythagorean-like relation. First, we compute the expression for $\log_{\overline \chi}(x)$ with 
$$\overline \chi(s)=s(1+\chi_{a,b}^2(s))=ks(1+cs^2),\quad{\rm where}\quad k=b+1,\quad {\rm and}\quad  c=\frac a{b+1}. $$
We apply the definition~\eqref{eq:chi_log}, and, after some algebraic manipulations, we obtain
$$\log_{\overline \chi}(x)=\frac1k \left[\log \frac{x}{\sqrt{1+cx^2}}+\log\sqrt{1+c}\right].$$
Now, the Pythagorean relation in~\eqref{eq:gen_pyth_rel1} reads
$$\log\left(\frac{\sinfab(x)}{\mathcal K_{\chi_{a,b}}}\right)=\frac1k \left[\log \frac{\tanfab(x)}{\sqrt{1+c\tanfab^2(x)}}+\log\sqrt{1+c}\right],$$
where $\mathcal K_{\chi_{a,b}}$ denotes the  corresponding  constant defined in~\eqref{eq:cospi4}. After further algebraic manipulations,
we obtain that this Pythagorean relation may be expressed
$$ \sinfab^{2b}(x)\left[(b+1)\cosfab^2(x)+a\sinfab^2(x)\right]=a.$$
This identity is readily seen to hold.

\subsection{The case $\chi(s)=\sqrt{s^2+2s}$}\label{ssec:ej2}

Choosing $\chi(s)=\sqrt{2s+s^2}$, we obtain
$$
\arctan_{\chi}(x)=\int_0^{x}\frac{\diff s}{1+2s+s^2}=\int_0^{x}\frac{\diff s}{(s+1)^2}
=\frac x{x+1},
$$
and then
$$\tanf(x)=\frac{1}{1-x}-1=\frac{x}{1-x}.$$

The values of the $\pi_\chi$ constants are 
$$\piftwo=2\arctanf(\infty)=2,\quad {\rm and}\quad \piffour=4\arctanf(1)=2.$$

The $\chi$-sine  and $\chi$-cosine functions are given by
\begin{equation}\label{eq:sin_chi_quad}
\sinf(x)=\exp\left(\int_1^x \frac{1-s}{s}\diff s\right)=\exp\left(\log x -x+1\right) = x e^{1-x}\, ,
\end{equation}
and
\begin{equation}\label{eq:cos_chi_quad} 
\cosf(x)=\frac{\sinf(x)}{\tanf (x)}=(1-x)e^{1-x},
\end{equation}
respectively. In addition, we have that

$$\sinf(0)=0,\quad \cosf(0)=e\, ,$$
$$\sinf(\piffour/4)=\cosf(\piffour/4)=\cosf(1/2)=\frac{\sqrt e}{2}\, ,$$
and 
$$\sinf(\piftwo/2)=\sinf(1)=1,\quad \cosf(\piftwo/2)=\cosf(1)=0\, .$$
From~\eqref{eq:sin_chi_quad} and \eqref{eq:cos_chi_quad}, we now deduce the corresponding Pythagorean-like relation. To do so, we start by computing the associated $\overline \chi $ function 
$$\overline \chi(s)=s(1+\chi^2(s))=s(s+1)^2.$$
Straightforward computations leads to
$$
\log_{\overline \chi}(x)
=\log\left(\frac{2x}{x+1}\right) +\frac{1-x}{2(x+1)}
$$
The pythagorean relation 
$$
\log\left(\frac{\sinf(x)}{\cosf(\piffour/4)}\right)=\log_{\overline \chi}\left(\frac{\sinf(x)}{\cosf(x)}\right)
$$
becomes
$$
\log\left(\frac{\sinf(x)}{\sqrt e/2}\right)=\log_{\overline \chi}\left(\tanf(x)\right)=\log\left(\frac{2\tanf(x)}{\tanf(x)+1}\right)+\frac{1-\tanf(x)}{2(\tanf(x)+1)}
$$
or equivalently
$$
\log\left(\frac{\sinf(x)}{\sqrt e/2}\right)
=\log\left(\frac{2\sinf(x)}{\sinf(x)+\cosf(x)}\right)+\frac{\cosf(x)-\sinf(x)}{2(\sinf(x)+\cosf(x))}
$$
That is,

$$
2\log\left(\sinf(x)+\cosf(x)\right)+\frac{\sinf(x)-\cosf(x)}{\sinf(x)+\cosf(x)}=1.
$$
One can easily check this equality by noting that $\sinf(x)+\cosf(x)=e^{1-x}$ and $\sinf(x)-\cosf(x)=(2x-1)e^{1-x}.$

\subsection{The case $\chi(s)=\sqrt{\frac{s^2+3s}2}$}\label{ssec:ej3}
We now consider the case $\chi(s)=\sqrt{\frac{s^2+3s}2}$. We will make use of this particular case in Section 6.1.

By definition, we have that
$$\arctanf(x)=2\int_0^x\frac{\diff s}{s^2+3s+2}=2\left(\log(s+1)-\log(s+2)\right)\Big|_0^x=2\left(\log\frac{x+1}{x+2}+\log 2\right).$$
Consequently, letting $y=\arctanf(x)$, we find
$$
x=\frac{2e^{y/2}-2}{2-e^{y/2}}.
$$
That is,
$$\tanf(x)=\frac{2e^{x/2}-2}{2-e^{x/2}},\quad x\in (0,\piftwo/2),$$
where 
$$\frac\piftwo 2=\arctanf(\infty)=2\log 2,$$
and in addition
$$\frac\piffour 4=\arctanf(1)=2\log\left(\frac43\right).$$

We now compute the $\chi$-sine function as
\begin{eqnarray}
	\sinf(x)&=&\exp\left(\int_{\piftwo/2}^x \frac{2-e^{s/2}}{2e^{s/2}-2}\diff s\right)
	\\
	&=& \exp\left(\log|2e^{s/2}-2|-s\Bigg|_{2\log2}^x\right)=4e^{-x}(e^{x/2}-1).
\end{eqnarray}
Then, the $\chi$-cosine function is given by
\begin{equation}
	\cosf(x)=\frac{\sinf(x)}{\tanf(x)}=\frac{4e^{-x}(e^{x/2}-1)}{2e^{x/2}-2}(2-e^{x/2})=2e^{-x}(2-e^{x/2}).
\end{equation}
For completeness, we also derive $\cosf$ defined as the derivative of the $\chi$-sine function
$$
	\cosf(x) = \sinf'(x)= -4 e^{-x} (e^{x/2}-1)+2e^{-x}e^{x/2}=2e^{-x}(2-e^{x/2})\, .
	$$
We now compute the $\overline \chi$-logarithm,  in order  to write the Pythagorean-like relation. 
First, we have
$$
\overline\chi(s)=s(1+\chi^2(s))=s\left(1+\frac{s^2+3s}2\right)=\frac{s(s+1)(s+2)}2
$$
and, after some algebraic manipulations, it follows that
$$\log_{\overline{\chi}}(x)=2\int_1^x \frac{\diff s}{s(s+1)(s+2)}=\log\left(\frac{|s(s+2)|}{(s+1)^2}\right)\Big|_1^x=\log\left(\frac{|x(x+2)|}{(x+1)^2}\right)-\log\left(\frac 34\right)$$
Now, using the fact that $\sinf(\piffour/4)=4\frac{9}{16}(\frac43-1)=\frac3{4},$ and also Eq.~\eqref{eq:gen_pyth_rel1}, it follows that
$$
\log\left(\frac{4\sinf(x)}3\right)=\log\left(\frac{4\tanf(x)(\tanf(x)+2)}{3(\tanf(x)+1)^2}\right),
$$
or, equivalently, 
$$
(\sinf(x)+\cosf(x))^2=\sinf(x)+2\cosf(x).
$$

	\subsection{The case $\chi (s)=\sqrt{e^s-1}$} \label{ssec:ej4}
We start by computing the $\chi$-$\arctan$ function. For any $x>0$, 
$$\arctan_{\chi}(x)=\int_0^x \frac{\diff s}{e^{s}}
=\frac{e^x-1}{e^x}.$$ 
Then, we obtain the inverse function
$$y=1-e^{-x}\Longrightarrow x=-\log(1-y)\, .$$
That is, for any $x\in[0,1)$
$$\tanf(x)=\log\left(\frac1{1-x}\right).$$
Moreover,
$$\frac{\piffour}4=\arctanf(1)=\frac{e-1}{e},\qquad \frac{\piftwo}2 =\arctanf(\infty)=1\, .$$
Consequently,
$$\sinf(x)=\exp\left(-\int_1^x\frac{\diff s}{\log (1-s)}\right)
=\exp\left(\int_0^{1-x}\frac{\diff t}{\log (t)}\right)
=e^{{\rm li}(1-x)}\, ,
$$
where $\text{li}$ denotes the logarithmic integral function~\cite{Gradshteyn_Tables}.
The $\chi$-cosine function is then given by 
$$\cosf(x)=\frac{\sinf(x)}{\tanf(x)}=\frac{e^{{\rm li}(1-x)}}{\log\left(\frac1{1-x}\right)}\, .$$

The relevant values are given by (for  $\cosf(0)$  see Proposition 1)
\begin{align*}
&\sinf(0)=0,\quad \cosf(0)=\exp\left(\int_0^1\frac{1-e^{-s}}{s}\diff s +{\rm Ei}(-1)\right), \\[0.5em]
&\sinf(\piffour/4)=\cosf(\piffour/4)=\cosf\left(\frac{e-1}{e}\right)=e^{{\rm li} (1/e)}, \\[0.5em]
&\sinf(\piftwo/2)=\sinf(1)=e^{{\rm li}(0)}=1,\quad \cosf(\piftwo/2)=\cosf(1)=0\, ,
\end{align*}
where $\text{Ei}$ denotes the exponential integral function~\cite{Gradshteyn_Tables}.

\noindent
For the corresponding Pythagorean-like identity we now compute the associated function
$$\overline \chi(s)=s(1+\chi^2(s))=se^s.$$
Then,
\begin{align*}
\log_{\overline \chi}(x)&=\int_1^x \frac{e^{-s}}{s}\diff s = \int_1^{\infty} \frac{e^{-s}}{s}\diff s - \int_x^{\infty} \frac{e^{-s}}{s}\diff s 
\\
&={\rm Ei}(-x)-{\rm Ei}(-1).
\end{align*}
As a consequence, 
$$
\log\left(\frac{\sinf(x)}{\cosf(\piffour/4)}\right)={\rm Ei}\left(-\frac{\sinf(x)}{\cosf(x)}\right)-{\rm Ei}(-1)
$$
or, equivalently,
$$
\log\left(\sinf(x)\right)-{\rm Ei}\left(-\frac{\sinf(x)}{\cosf(x)}\right)={\rm li}(1/e)-{\rm Ei}(-1)\, .
$$

That is,
$$
{\rm li}\left(1-x\right)-{\rm Ei}\left(\log(1-x)\right)={\rm li}(1/e)-{\rm Ei}(-1)\, , 
$$
 which, since the right-hand-side is zero, 
recovers the fundamental relation between the exponential and logarithmic integral functions,
$${\rm Ei}(\log x)={\rm li}(x).$$

\section{Generalized Beta and Ateb functions and non-linear oscillators}

In this section we  give some of the main results  of the paper. More precisely, we first use the previous definitions and results to define generalized versions of the incomplete Beta and the Ateb sine functions,  and give particular examples. We then  prove that the exact  solutions of broad  subfamilies of \eqref{eq:nonlinear_osc_f} can be written in closed-form by  means of these new incomplete $\chi$-Beta and $\chi$-Ateb functions. An important tool that we will use to accomplish this task is the family of generalized Sundman-like transformations recently introduced by the authors in~\cite{Gordoa2026}, and whose definition will be  recalled later  in the paper. 

\subsection{Dealing with generalized Beta and Ateb functions}

	In this section we introduce an extension of the well-known Beta function, and its incomplete version. We begin by recalling the following expression for the incomplete Beta function,
	
	\begin{equation}\label{eq:Beta}
		{\rm B}(c,d;\sin^2(x))=2\int_0^x \sin^{2c-1}(t) \cos^{2d-1}(t)\diff t, \quad c,d>0, \quad x\in \left[0,\frac{\pi}{2}\right]
	\end{equation} 
	which is equivalent to the definition given in \eqref{eq:Beta0} and again reduces to the Beta function for $x=\frac{\pi}{2}$.
	The characterization of this function in~\eqref{eq:Beta} inspires the following definition.
	\begin{definition}[ $\chi$-Beta functions ]
		Let  $c$, $d$ be positive real numbers and let $\chi$ be a function satisfying the conditions in Eq.~\eqref{eq:conditions}. We define the incomplete $\chi$-Beta function as
		\begin{equation}\label{eq:Betagen}
			\Betaf\left(c,d;\sinf^2 x\right):= 2\int_0^x \sinf^{2c-1}(t)\cosf^{2d-1}(t)\diff t , \quad x\in \left[0,\frac{\pif}{2}\right],
		\end{equation}
which for $x=\frac{\pif}{2}$ reduces to the complete $\chi$-Beta function,
\[
\Betaf\left(c,d;\sinf^2 \left(\frac{\pif}{2}\right)\right)\equiv  \Betaf\left(c,d\right) = 2\int_0^{\frac{\pif}{2}} \sinf^{2c-1}(t)\cosf^{2d-1}(t)\diff t \, .
\]

	\end{definition}

	\begin{remark}
	 The $\chi$-Pythagorean relation~\eqref{eq:gen_pyth_rel1}, which allows us to express the $\chi$-cosine function in terms of the $\chi$-sine as
		\begin{equation}
			\cosf(x)=\frac{\sinf(x)}{\exp_{\overline \chi}\left(\log\left(\sinf(x)/\mathcal K_\chi\right)\right)}\, 
		\end{equation}
		(see \eqref{3.15}),
	 together with the change of variable $s=\sinf^2(t)$ in Eq.~\eqref{eq:Betagen}, lead to
			\begin{equation}
			{\rm B}_\chi\left(c,d;\sinf^2(x)\right)=\int_0^{\sinf^2(x)}s^{c-1} \left(\frac{s^\frac 12}{\exp_{\overline \chi}\left(\log\frac{ s^{\frac{1}{2}} }{\mathcal K_\chi}\right)}\right)^{2d-2}\diff s
		\end{equation}
		or, equivalently,
		\begin{equation}
			{\rm B}_\chi\left(c,d;x\right)=\int_0^{x}s^{c+d-2} \left(\exp_{\overline \chi}\left(\log\frac{s^{\frac{1}{2}}}{\mathcal K_\chi}\right)\right)^{2-2d}\diff s, \quad x\in [0,1]\, ,
		\end{equation}
		which is an alternative definition of the incomplete $\chi$-Beta function. As before, for $x=1$ we obtain the following alternative definition of the  complete  $\chi$-Beta function,
\[
\Betaf\left(c,d;1\right)\equiv  \Betaf\left(c,d\right) = \int_0^{1}s^{c+d-2} \left(\exp_{\overline \chi}\left(\log\frac{s^{\frac{1}{2}} }{\mathcal K_\chi}\right)\right)^{2-2d}\diff s\, .
\]		

	\end{remark}

\begin{definition}[$\chi$-Ateb sine function]
	Let $\alpha$ be a real number, and let $\chi$ be a positive increasing function. Then, we define the $\chi$-Ateb sine function as

\begin{equation}
	{\rm sa}_\chi(n,m;x)=\left[{\rm B}_\chi^{-1}\left(\frac1{n+1},\,\frac 1{m+1};2x\right)\right]^{\frac1{n+1}}, \quad x\in \left[0,\frac{\pif}{2}\right],	
\end{equation}
and for $m=1$,
\begin{equation}
	{\rm sa}_\chi(n;x)=\left[{\rm B}_\chi^{-1}\left(\frac1{n+1},\,\frac 12;2x\right)\right]^{\frac1{n+1}}, \quad x\in \left[0,\frac{\pif}{2}\right].	
\end{equation}
\end{definition}

\subsection*{Examples}

 We now return to Examples 4.1-4.4. 
	
	\subsubsection*{ 4.1  The case $\chi(s)= \chi_{a,b}(s)=  \sqrt{as^2+b}$}
	Regarding the example in Section~\ref{ssec:ej1}, the  corresponding  incomplete incomplete $\chi$-Beta function is defined as the function satisfying
\begin{eqnarray*}
	{\rm B}_{\chi_{a,b}} \left(c,d;\sin^{\frac2{1+b}}\left(\sqrt{a(b+1)}\,x\right)\right)&=& 2\int_0^x \sin^{\frac{2c-1}{1+b}}\left(\sqrt{a(b+1)}\,s\right)\left(\sqrt{\frac{a}{b+1}}\frac{\cos\left(\sqrt{a(b+1)}\,x\right)}{\sin^{\frac b{1+b}}\left(\sqrt{a(b+1)}\,x\right)}\right)^{2d-1}\diff s
	\\
	&=&2 \frac{a^{d-1}}{(b+1)^d}
	\int_0^{\sqrt{a(b+1)}\,x} \sin^{\frac{b+2c-2bd-1}{1+b}}\left(t\right)\,\cos^{2d-1}\left(t\right)\diff t
		\\
	&=& \frac{a^{d-1}}{(b+1)^d}	{\rm B}\left(\frac{b+c-2bd}{b+1},d; \sin^2(\sqrt{a(b+1)}\,x)\right).
\end{eqnarray*}
Equivalently,
\begin{equation}\label{eq:chi_a-b_beta}
	{\rm B}_{\chi_{a,b}} \left(c,d;x\right)= \frac{a^{d-1}}{(b+1)^d}	{\rm B}\left(\frac{b+c-2bd}{b+1},d; x^{ 1+b }
	\right).
\end{equation}

	\subsubsection*{ 4.2  The case $\chi(s)=\sqrt{s^2+2s}$}

In this case the application of the definition of the incomplete $\chi$-Beta function gives,
\begin{eqnarray}
	{\rm B}_\chi\left(c,d;x^2e^{2-2x}\right)&=&2\int_0^x \left(s e^{1-s}\right)^{2c-1}  \left((1-s)e^{1-s}\right)^{2d-1}\diff s
	\nonumber \\&=&
	2\int_0^x s^{2c-1}  (1-s)^{2d-1}e^{2(1-s)(c+d-1)}\diff s
\end{eqnarray}
Equivalently, using the Lambert $W$ function we can write

\begin{eqnarray}
	{\rm B}_\chi\left(c,d;x\right)&=&
	2\int_0^{-W\left(-\frac{\sqrt x}e\right)} s^{2c-1}  (1-s)^{2d-1}e^{2(1-s)(c+d-1)}\diff s\, ,
\end{eqnarray}
which does not seem to be related to the classical incomplete Beta function, in a simple way at least. 

	\subsubsection*{ 4.3  The case $\chi(s)=\sqrt{\frac{s^2+3s}{2}}$}

In this case the application of the definition of the incomplete $\chi$-Beta function gives

 Note changes to numbers 

\begin{eqnarray}
	{\rm B}_\chi\left(c,d;16 e^{-2x}(e^{x/2}-1)^2\right)&=&2\int_0^x \left(4e^{-s}(e^{s/2}-1)\right)^{2c-1}  \left(2e^{-s}(2-e^{s/2})\right)^{2d-1} \diff s
	\\&=&
	2^{4c+2d-2}\int_0^x (e^{s/2}-1)^{2c-1}  (2-e^{s/2})^{2d-1}e^{-2s(c+d-1)}\diff s\, .
\end{eqnarray}

Equivalently, the application of the change of variable $u =e^{\frac{s}{2}}$ leads to
\begin{equation}\label{ast}
{\rm B}_\chi\left(c,d;16 e^{-2x}(e^{x/2}-1)^2\right) = 2^{4c+4d-2}\int_1^{e^{\frac{x}{2}}} (u-1)^{2c-1}  \left(1-\frac{u}{2}\right)^{2d-1}u^{3-4(c+d)}\diff u\, ,
\end{equation}
which again does not seem to be related to the classical incomplete Beta function, in a simple manner at least.

	\subsubsection*{ 4.4  The case $\chi(s)=\sqrt{e^s-1}$}
In the case of the last example the incomplete $\chi$-Beta function is given by 
\begin{equation}
	\Betaf\left(c,d;e^{2 {\rm li}(1-x)}\right)=2\int_0^x e^{2(c+d-1){\rm li}(1-s)} \left(\log\left(\frac1{1-s}\right)\right)^{1-2d}\diff s \, ,
\end{equation}
or, equivalently,
\[
\Betaf\left(c,d;x\right)=2\int_0^{1-{\rm li}^{-1}\left(\log \sqrt{x} \right) } e^{2(c+d-1){\rm li}(1-s)} \left(\log\left(\frac1{1-s}\right)\right)^{1-2d}\diff s \, .
\]

\subsection{Applications to autonomous second order differential equations}

 We give now the definitions of the direct and inverse generalized Sundman transformations presented in~\cite{Gordoa2026}. While the inverse transformation plays a relevant role in the proof of the central theorem of this work, the direct transformation is given here for completeness. 
	\begin{definition}[Direct power-type generalized Sundman transformation (GST)]\label{def:GSTs} 
		Let $\alpha,\beta,a,b\in\mathbb{R}$. We define the transformed function $Y(s)$ of a differentiable function $y=y(x)$ as

		\begin{equation}\label{eq:GSTs1}
			Y(s) =\mathcal S_{\alpha,\beta}^{a,b}[y](s)\equiv \,
			y^\alpha(x)y'^\beta (x)
		\end{equation}

		where

		\begin{equation}\label{eq:GSTs2}
			s'(x)=y^a(x)y'^b(x),
		\end{equation}
		and where $'$ denotes a derivative with respect to $x$. $Y(s)=\mathcal S_{\alpha,\beta}^{a,b}[y](s)$, or $Y=\mathcal S_{\alpha,\beta}^{a,b}[y]$, will be called the \textit{direct generalized Sundman transformation (or transform)} of the function $y$.
	\end{definition}
	
	 Instead of giving  the definition of the inverse Sundman-like transformation in the general case,  for simplicity, we restrict ourselves here to the specific case of relevance to  this paper. The general case and  various  mathematical properties can be found in the reference~\cite{Gordoa2026},  where its usefulness  in studying certain nonlinear versions of the Schrödinger equation  is also discussed. 
	
	\begin{definition}[Inverse power-type generalized Sundman transformation (IGST)]\label{def:invGSTs}
		Let $\alpha,\beta,a,b\in\mathbb R$, $\beta\neq 0$ such that $\beta(a+1)=\alpha(b-1)$. We define the inverse generalized Sundman transformation $\widetilde{\mathcal S}_{\alpha,\beta}^{a,b}$ of a function $Y=Y(s)$, denoted $\widetilde{\mathcal S}_{\alpha,\beta}^{a,b}[Y],$ or $\widetilde{\mathcal S}_{\alpha,\beta}^{a,b}[Y](x)$ when written as a function of the variable $x$, as

		\begin{equation}\label{eq:invGSTs1}
			y(x)=\,
			\widetilde{\mathcal S}_{\alpha,\beta}^{a,b}[Y](x)
			\,\equiv\,
			\mathcal G(s),
		\end{equation}

		where
		 the function $\mathcal G(s)$ is defined as 
		
		%
%
		
		%
		\begin{equation}\label{eq:invGSTs4}
			\mathcal G(s)=K_3\,\exp\left(\int Y(s)^{\frac{1-b}\beta}\diff s\right),
		\end{equation}
		%
		
and the change of variable $s=s(x)$ is given by the inverse function of 		
		\begin{equation}\label{eq:invGSTs2}
			x(s)=  K_2+\int \frac{\diff s}{\mathcal G(s)^{a-\frac {\alpha b}\beta}Y(s)^{\frac b\beta}},
		\end{equation}

		with 
		 $K_2$ and $K_3$ in the above formulae being arbitrary constants.

	\end{definition}  
	
	Let us now continue by  recalling Corollary 5 in~\cite{Gordoa2026},  which expresses the solution of a certain class of second order differential equation via  the generalized inverse Sundman transformation.

\begin{coro} \label{coro:EDO2}
	Given $\gamma \in \mathbb{R}$ and an arbitrary function $g$, then the general solution of the family of autonomous second order differential equations 
	\begin{equation}\label{eq:EDO2}
		\frac{yy''}{y'^2}=g(y^\gamma\, y')
	\end{equation}
	is given by 
	\begin{equation}\label{eq:EDO2_sol}
		y(x)=\widetilde {\mathcal S}_{\gamma\beta,\beta}^{a,b}[Y](x),\qquad s=\int\frac{Y^{\frac{a+1}{\beta\gamma}-1}}{\beta\gamma+ \beta g\left(Y^{1/\beta}\right)}dY ,
	\end{equation}
	with $b=1+\frac{a+1}{\gamma},$ and for arbitrary values of the parameters $a$ and $\beta$. Letting $\beta=1$ and choosing $a=\gamma-1$, we have that $\frac{a+1}{ \gamma}-1 =0$ which in turn implies $b=2$. This then allows a simplification of the result~\eqref{eq:EDO2_sol}, and the solution of Eq.~\eqref{eq:EDO2} can be expressed as
	\begin{equation}\label{eq:EDO2_simple}
		y(x)= \widetilde {\mathcal S}^{\gamma-1,2}_{\gamma,1}[Y](x),\qquad s=\int\frac{dY}{\gamma+g(Y)}.
	\end{equation}
\end{coro}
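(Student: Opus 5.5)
The plan is to verify directly that the inverse generalized Sundman transform of Definition~\ref{def:invGSTs} produces a solution of Eq.~\eqref{eq:EDO2}. This means computing $y'$ and $y''$ in terms of $Y(s)$ and then reading off the first-order condition that $Y$ must satisfy. First I will check the constraint of Definition~\ref{def:invGSTs}. With $\alpha=\gamma\beta$ we need $\beta(a+1)=\gamma\beta(b-1)$, which is exactly $b=1+\frac{a+1}{\gamma}$, so the transform is well defined.

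Next, write $y=\mathcal G(s)$ with $\mathcal G'(s)=\mathcal G\,Y^{(1-b)/\beta}$, and use $\frac{\diff s}{\diff x}=\mathcal G^{a-\alpha b/\beta}Y^{b/\beta}=\mathcal G^{a-\gamma b}Y^{b/\beta}$ from Eq.~\eqref{eq:invGSTs2}. By the chain rule,
\[
y'=\mathcal G^{a-\gamma b+1}Y^{1/\beta}.
\]
Since $a+1-\gamma b=-\gamma$ by the constraint, this simplifies to $y'=\mathcal G^{-\gamma}Y^{1/\beta}$. The key identity is therefore
\[
y^\gamma y'=Y^{1/\beta},
\]
which is what makes the argument of $g$ collapse. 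In other words, the transform is designed so that $Y=(y^\gamma y')^\beta$, consistent with the direct GST $\mathcal S^{a,b}_{\gamma\beta,\beta}$.

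Then I differentiate $\log y'=-\gamma\log\mathcal G+\beta^{-1}\log Y$ with respect to $x$:
\[
\frac{y''}{y'}=\frac{\diff s}{\diff x}\Big(-\gamma Y^{(1-b)/\beta}+\frac{\dot Y}{\beta Y}\Big),
\]
where $\dot Y=\frac{\diff Y}{\diff s}$. Dividing by $y'/y=\frac{\diff s}{\diff x}\,Y^{(1-b)/\beta}$ gives
\[
\frac{yy''}{y'^2}=-\gamma+\frac{\dot Y\,Y^{(b-1)/\beta-1}}{\beta}.
\]
Imposing Eq.~\eqref{eq:EDO2} yields the separable equation
\[
\dot Y=\beta\big(\gamma+g(Y^{1/\beta})\big)Y^{1-(b-1)/\beta}.
\]
Using $(b-1)/\beta=\frac{a+1}{\beta\gamma}$, this integrates to $s=\int \frac{Y^{\frac{a+1}{\beta\gamma}-1}}{\beta\gamma+\beta g(Y^{1/\beta})}\,\diff Y$, which is Eq.~\eqref{eq:EDO2_sol}.

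The main obstacle is the sign and exponent bookkeeping. I need to confirm that the stated formula matches, possibly up to the sign convention for $\gamma$; if a sign mismatch appears, I would recheck the exponent $a-\alpha b/\beta$ in Definition~\ref{def:invGSTs}. Generality also needs an argument: the constants $K_2$, $K_3$ and the integration constant in $s$ supply the two free constants, so the general solution is obtained. This holds on intervals where $y,y'>0$ and $\gamma+g\neq0$, where the correspondence $y\leftrightarrow Y$ is invertible. Finally, the simplification follows by substituting $\beta=1$ and $a=\gamma-1$, which give $b=2$ and exponent $0$, hence $s=\int \frac{\diff Y}{\gamma+g(Y)}$.
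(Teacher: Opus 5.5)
The paper gives no proof of this statement; it imports it as Corollary 5 of \cite{Gordoa2026}. Your direct verification therefore serves as a self-contained proof, and it is correct.

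\begin{itemize}
\item The constraint $\beta(a+1)=\alpha(b-1)$ with $\alpha=\gamma\beta$ is exactly $b=1+\frac{a+1}{\gamma}$.
\item The exponent $a+1-\gamma b=-\gamma$ gives the key identity $y^\gamma y'=Y^{1/\beta}$.
\item Your formula $\frac{yy''}{y'^2}=-\gamma+\beta^{-1}\dot Y\,Y^{\frac{b-1}{\beta}-1}$, combined with $\frac{b-1}{\beta}=\frac{a+1}{\beta\gamma}$, lands exactly on \eqref{eq:EDO2_sol}. Your hedge about a possible sign mismatch is unnecessary.
\end{itemize}

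Compared with the bare citation, your route shows why the argument of $g$ collapses. It also gives the converse. Once $Y=(y^\gamma y')^\beta$ and $s'=y^ay'^b$ are set through the direct transform of Definition~\ref{def:GSTs}, each step is an identity valid for any $y$. Hence every solution with $y,y'>0$ and $\gamma+g\neq0$ is produced by the inverse transform. You only gesture at this (``the correspondence is invertible''); stating it as above is what justifies the word ``general''.

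Two small points to tighten:
\begin{itemize}
\item \textbf{The constants.} Of the three constants you list, the shift $K$ in $s$ is redundant. Write $Y(s)=F(s-K)$ and use $a-\gamma b=-(\gamma+1)$. One gets $y(x)=K_3\,\Phi\big(K_3^{-(\gamma+1)}(x-K_2)\big)$ for a fixed function $\Phi$. The two essential constants are $K_2$ and $K_3$, reflecting the symmetries $x\mapsto x+c$ and $(x,y)\mapsto(\lambda^{\gamma+1}x,\lambda y)$. This is the $K_3^{\gamma+1}$ factor that appears in the next theorem.
\item \textbf{Implicit hypotheses and missed solutions.} The statement tacitly needs $\gamma\neq0$ and $\beta\neq0$. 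The equilibria $Y\equiv Y_0$ with $\gamma+g(Y_0^{1/\beta})=0$ give power-law solutions with $y^\gamma y'$ constant, and the quadrature misses them. Your restriction to $\gamma+g\neq0$ correctly excludes these, so ``general solution'' should be read in that sense.
\end{itemize}
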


As a consequence of the latter corollary, and from the definitions of the $\chi$-trigonometric functions and the generalized inverse Sundman transformation, the following theorem is easily obtained.

\begin{teorema}
	 For $\gamma\in\mathbb R$ a real parameter, let us consider the differential equation \eqref{eq:EDO2}, i.e., 
	\begin{equation}\label{eq:EDO_autonomous}
		\frac{yy''}{y'^2}=g(y^\gamma y').
	\end{equation}
	Then, given $\beta\in\mathbb R\setminus[-1/2,0]$, the general solution of the latter equation can be expressed as 
	\begin{equation}\label{eq:sol_inv_beta}
	y(x)=K_3\left({\rm B}_{\chi_g}^{-1}\left(\frac{1}{2\beta\gamma},\,1+\frac1{2\beta};\,\frac{2\beta\gamma}{K_3^{\gamma+1}}\, (x-K_2)  \right)\right)^\frac1{2\beta\gamma},
	\end{equation}
	where  $K_2$ and $K_3$ are arbitrary constants  and $\beta$ is such that $\beta\gamma>0$, and
	\begin{equation}\label{eq:sol_inv_beta(chi)}
		\chi_g(z)=\sqrt{\frac {g\left(z^{1/\beta}\right)}\gamma} \, .
	\end{equation}
In particular, letting $\beta=-1$ for $\gamma<0$, and with $g$ a negative function, the solution of Eq.~\eqref{eq:EDO_autonomous} can be expressed as

\begin{equation}
	y(x)=K_3\;{\rm sa}_{\chi_g}\left(2|\gamma|-1;\; \frac{|\gamma|}{K_3^{\gamma+1}}(x-K_2)\right),
	\end{equation}
with
$\chi_g(z)=\sqrt{\frac {g\left(1/z\right)}\gamma}.$
\end{teorema}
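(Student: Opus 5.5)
The plan is to verify the formula directly, using only the definitions of the $\chi$-trigonometric functions, the $\chi$-Beta function~\eqref{eq:Betagen} and Proposition~\ref{prop:cosf_der}. This avoids running the full inverse Sundman machinery of Corollary~\ref{coro:EDO2}; the corollary only motivates the choice of exponents. Set $\chi=\chi_g$, $c=\frac1{2\beta\gamma}$ and $d=1+\frac1{2\beta}$. The hypotheses $\beta\gamma>0$ and $\beta\notin[-1/2,0]$ are exactly $c>0$ and $d>0$, so $\Betaf(c,d;\cdot)$ is well defined and increasing, hence invertible. I parametrize the candidate solution by $t\in\Omega_\chi$:
\[
y=K_3\,\sinf(t)^{\frac1{\beta\gamma}},\qquad x-K_2=\frac{K_3^{\gamma+1}}{2\beta\gamma}\,\Betaf\bigl(c,d;\sinf^2(t)\bigr).
\]
Eliminating $t$ gives precisely \eqref{eq:sol_inv_beta}, since $\sinf^2(t)=(y/K_3)^{2\beta\gamma}$.

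\textbf{Step 1 (first derivative).} By \eqref{eq:Betagen}, $\frac{dx}{dt}=\frac{K_3^{\gamma+1}}{\beta\gamma}\sinf^{2c-1}\cosf^{2d-1}$, with $2c-1=\frac1{\beta\gamma}-1$ and $2d-1=1+\frac1\beta$. Also $\frac{dy}{dt}=\frac{K_3}{\beta\gamma}\sinf^{\frac1{\beta\gamma}-1}\cosf$, because $\sinf'=\cosf$. Dividing gives
\[
y'=K_3^{-\gamma}\cosf(t)^{-1/\beta},\qquad y^\gamma y'=\left(\frac{\sinf(t)}{\cosf(t)}\right)^{1/\beta}=\tanf(t)^{1/\beta}.
\]
Therefore $g(y^\gamma y')=\gamma\,\chi_g^2(\tanf(t))$ by the definition \eqref{eq:sol_inv_beta(chi)}. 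This is where the choice of $\chi_g$ enters.

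\textbf{Step 2 (second derivative).} Differentiate $y'$ in $t$ and divide by $\frac{dx}{dt}$. Then substitute $\cosf'=-\frac{\cosf^2}{\sinf}\chi^2(\tanf)$ from Proposition~\ref{prop:cosf_der}. I expect all powers of $\sinf$, $\cosf$ and $K_3$ to cancel in $yy''/y'^2$, leaving $\gamma\chi^2(\tanf(t))$. Combined with Step~1, this gives \eqref{eq:EDO_autonomous}. Since $K_2,K_3$ are two free constants for a second order autonomous equation, and the construction mirrors the general IGST solution of Corollary~\ref{coro:EDO2}, this is the general solution.

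\textbf{Step 3 (Ateb case).} Take $\beta=-1$ and $\gamma<0$. Then $c=\frac1{2|\gamma|}$, $d=\frac12$ and $2\beta\gamma=2|\gamma|$. Writing $n+1=2|\gamma|$ and $m=1$, the formula becomes $K_3\,{\rm sa}_{\chi_g}(2|\gamma|-1;\frac{|\gamma|}{K_3^{\gamma+1}}(x-K_2))$, where the factor $2$ in the Ateb definition absorbs the $2$ in $2|\gamma|$. Moreover $\chi_g(z)=\sqrt{g(1/z)/\gamma}$ is real and positive because $g<0$ and $\gamma<0$.

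The main obstacle is not the computation but the admissibility of $\chi_g$. The $\chi$-functions require $\chi_g$ to be positive and increasing and to satisfy \eqref{eq:conditions}, which constrains $g$ (sign of $g/\gamma$, monotonicity of $z\mapsto g(z^{1/\beta})$). I would state these as implicit hypotheses under which the $\chi$-Beta inverse is meaningful, and treat the identity itself as a purely formal verification valid on $\Omega_\chi$.
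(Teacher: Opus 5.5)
Your verification is correct, and the powers do cancel as you expected: $y''=\gamma K_3^{-2\gamma-1}\chi_g^2(\tanf(t))\cosf(t)^{-2/\beta}\sinf(t)^{-1/(\beta\gamma)}$, so $yy''/y'^2=\gamma\chi_g^2(\tanf(t))=g(y^\gamma y')$ by Step 1.

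Your route is the reverse of the paper's. The paper \emph{derives} the formula from Corollary~\ref{coro:EDO2} with $a=\beta\gamma-1$, $b=\beta+1$:
it recognizes the reduced quadrature as $s=\frac{1}{\beta\gamma}\arctan_{\chi_g}(Y)+K$, so that $Y=\tan_{\chi_g}(\beta\gamma(s-K))$;
it obtains $\mathcal G=K_3\sin_{\chi_g}^{1/(\beta\gamma)}(\beta\gamma(s-K))$ from \eqref{dlogsinf};
and it identifies the integral for $x(s)$ with $\Betaf$.
Its variable $\overline s=\beta\gamma(s-K)$ is your $t$, so the two parametric curves coincide. You instead \emph{check} that curve against the ODE, using only $\sinf'=\cosf$, Proposition~\ref{prop:cosf_der} and \eqref{eq:Betagen}.

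Your version is self-contained, since it makes no appeal to the external IGST result. It also isolates exactly where $\chi_g$ is forced, namely $y^\gamma y'=\tanf(t)^{1/\beta}$. The paper's version shows where the formula comes from, and it inherits the word ``general'' from Corollary~\ref{coro:EDO2}.

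That word is the one place your write-up is too loose. A family with two free constants need not be the general solution, and ``mirrors the IGST solution'' is not an argument. Your Step 1 closes it:
\begin{enumerate}
\item Given $y(x_0)=y_0>0$ and $y'(x_0)=y_1>0$, pick $t_0\in\Omega_\chi$ with $\tanf(t_0)=(y_0^\gamma y_1)^\beta$. This is possible because $\tanf$ maps $\Omega_\chi$ onto $[0,\infty)$.
\item Set $K_3=y_0\sinf(t_0)^{-1/(\beta\gamma)}$, and choose $K_2$ so that $x(t_0)=x_0$. Then $y=y_0$ at $x_0$.
\item Since $y^\gamma y'=\tanf(t_0)^{1/\beta}=y_0^\gamma y_1$, this forces $y'=y_1$ there.
\item Uniqueness for $y''=y'^2g(y^\gamma y')/y$ (say, with $g$ locally Lipschitz) then shows your family contains every solution with $y,y'>0$.
\end{enumerate}
Conceptually, $K_2$ and $K_3$ parametrize the translation $x\mapsto x+c$ and the scaling $(x,y)\mapsto(\lambda^{\gamma+1}x,\lambda y)$, both of which are symmetries of the equation.

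Your caveat about the admissibility of $\chi_g$ is fair; the paper leaves it implicit as well.
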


\begin{proof}
We start by recalling that the solution of Eq.~\eqref{eq:EDO_autonomous} is as given in Corollary~\ref{coro:EDO2} through the use of generalized Sundman transformations.  More precisely, letting $a=\beta\gamma-1$ (which in turn implies $b=\beta+1$) in Eq.~\eqref{eq:EDO2_sol},

	\begin{equation}\label{eq:sol_beta}
			y(x)= \widetilde {\mathcal S}^{\beta\gamma-1,\beta+1}_{\beta\gamma,\beta}[Y](x),\qquad s=\frac1\beta\int\frac{dY}{\gamma+g(Y^{1/\beta})}.
		\end{equation}
	 
Note that, assuming $\gamma\neq0$ and $\frac{g\left(Y^{1/\beta}\right)}{\gamma}\geqslant 0$,   the change of variable in the latter equation can be expressed by using the definition of the $\chi$-arctan function in Eq.~\eqref{eq:chi_arctan}, as
\begin{equation}
	s(Y)=\frac1{\beta\gamma}\int\frac{\diff Y}{1+\chi_g^2(Y)}=\frac{1}{\beta\gamma}\arctan_{\chi_g}(Y)+K,
\end{equation}
where
\begin{equation}
	\chi_g(Y)=\sqrt{\frac {g\left(Y^\frac 1\beta\right)}\gamma}. 
\end{equation}
Then,
\begin{equation}
	Y(s)=\tan_{\chi_g}\left(\beta\gamma (s-K)\right)\, .
\end{equation}
We continue by aplying the inverse Sundman transformation  $\widetilde{\mathcal S}^{\beta\gamma-1,1+\beta}_{\beta\gamma,\beta}$ . To do so,  we start by computing the function $\mathcal G$ defined in Eq.~\eqref{eq:invGSTs4}. Recalling that $b-1=\beta$, this is written as
\begin{equation}
	\mathcal G(s)=K_3\exp\left(\int\frac{\diff s}{Y(s)}\right)=K_3\exp\left(\int\frac{\diff s}{\tan_{\chi_g}(\beta\gamma (s-K))}\right)=K_3\exp\left(\frac{1}{\beta\gamma}\int\frac{\diff \overline s}{\tan_{\chi_g}(\overline s)}\right),
\end{equation}
that is,  using \eqref{dlogsinf}, 
\begin{equation}
	\mathcal G(s)=
	K_3\sin^\frac{1}{\beta\gamma}_{\chi_g}\left(\overline s\right)=
	K_3\sin^\frac{1}{\beta\gamma}_{\chi_g}\left(\beta\gamma (s-K)\right)
\end{equation}
(where we note that $K_3$ already represents any constant of integration).

To obtain the change of variable $s=s(x)$ given in the definition of the inverse Sundman transform we have to compute the following integral,
\begin{equation}
	x(s) -K_2  = \int \frac{\diff s}{\mathcal G(s)^{a-\frac {\alpha b}\beta}Y(s)^{\frac b\beta}},
\end{equation}
with $\alpha=\beta\gamma,\,a=\beta\gamma-1,$ and $b=\beta+1$, that is
\begin{align}\label{eq:xs1}
	x(s)  -K_2  &= \int \frac{\mathcal G(s)^{\gamma+1}}{Y(s)^{\frac{\beta+1}\beta}}\, ds=  K_3^{\gamma+1}\int \left(\sin_{\chi_g}(\beta\gamma (s-K))\right)^{\frac{1-\beta\gamma}{\beta\gamma}} \left(\cos_{\chi_g}(\beta\gamma (s-K))\right)^{\frac{\beta+1}\beta}\, \diff s \nonumber \\
	& = \frac{K_3^{\gamma+1}}{\beta\gamma} \int \left(\sin_{\chi_g}(\overline s)\right)^{\frac{1-\beta\gamma}{\beta\gamma}} \left(\cos_{\chi_g}(\overline s)\right)^{\frac{\beta+1}\beta}\, \diff \overline s \, ,
\end{align}
We now use the definition of the incomplete $\chi$-Beta function given in Eq.~\eqref{eq:Betagen} to write the change of variable $x(s)$ as 
\begin{equation}
	x(s)  -K_2  =\frac{K_3^{\gamma+1}}{2\beta\gamma}{\rm B}_{\chi_g}\left(\frac{1}{2\beta\gamma},\,1+\frac1{2\beta};\,\sin_{\chi_g}^2 (\beta\gamma(s-K))\right),
\end{equation}
or equivalently
\begin{equation}
	\beta \gamma  (s(x)-K)  =\arcsin_{\chi_g}\left(\sqrt{ {\rm B}_{\chi_g}^{-1}\left(\frac{1}{2\beta\gamma}\,,1+\frac1{2\beta};\,\frac{2\beta\gamma}{K_3^{\gamma+1}}\,  (x-K_2)  \right)}\right),
\end{equation}
which in turn allows us to write the solution $y(x)$ of Eq.~\eqref{eq:EDO_autonomous} as
\begin{eqnarray}\nonumber
	y(x)&=&\mathcal G(s(x))=K_3\sin_{\chi_g}^{\frac{1}{\beta\gamma}}\left[\arcsin_{\chi_g}\left(\sqrt{{\rm B}_{\chi_g}^{-1}\left(\frac{1}{2\beta\gamma},\,1+\frac1{2\beta};\,\frac{2\beta\gamma}{K_3^{\gamma+1}}\,  (x-K_2)  \right)}\right)\right]
	\\
	&=&K_3\left({\rm B}_{\chi_g}^{-1}\left(\frac{1}{2\beta\gamma},\,1+\frac1{2\beta};\,\frac{2\beta\gamma}{K_3^{\gamma+1}}\,  (x-K_2)  \right)\right)^\frac{1}{2\beta\gamma}.
\end{eqnarray}

The conditions on the parameters $\beta$ and $\gamma$ are imposed in order to ensure that the parameters $a=\frac{1}{2\beta\gamma}$ and $b=1+\frac1{2\beta}$  are both positive.

\end{proof}
\section{Solutions of families of nonlinear equations in terms of the $\chi$-Ateb sine function}

In this last section we apply  our  results to a pair of nonlinear differential equations  of much recent and current interest.  The first,  of great  theoretical interest~\cite{Mahomed2007,Karasu2009, Nakpim2010, Mustafa2013, Orverem2021}, has been studied  over the last few years, including  by using generalized Sundman transformation methods. The second differential equation is a nonlinear extension of the harmonic oscillator  which has also recently been the subject of much intensive research 
~\cite{Cveticanin2020a,Cveticanin2020b,Vujkov2022,Cveticanin2025b}, using both analytical and approximative methods.  Here we  give the exact solution of Eq.~\eqref{eq:nonlinear_osc_f} under certain constraints on the potential function $f$. 

\subsection{A first example: the equation $y'' +\mu_2y^{k_2} y' + \mu_1y^{k_1}=0$}
The equation
\begin{equation}\label{eq:EDOmus}
	y'' +\mu_2y^{k_2} y' + \mu_1y^{k_1}=0,
\end{equation}
with $\mu_1,\mu_2>0$ and $k_1,k_2\in\mathbb{R}$ was studied in ~\cite{Nakpim2010}. It can be written as
\begin{equation}
	\frac{y''y}{y'^2} = -\mu_2 \frac{y^{k_2+1}}{y'}-\mu_1\frac{y^{k_1+1}}{y'^2}=-\mu_2 \frac{y^{k_2+1}}{y'}-\mu_1\left(\frac{y^{\frac{k_1+1}2}}{y'}\right)^2.
\end{equation}
Imposing that $k_2+1=(k_1+1)/2,$ it follows that
\begin{equation}
	\frac{y''y}{y'^2} = -\mu_2 \frac{y^{k+1}}{y'}-\mu_1\left(\frac{y^{k+1}}{y'}\right)^2 :=g\left(\left(y^{-(k+1)}y'\right)^{-1}\right),
\end{equation}
where we have dropped the subindex in $k_2:=k$ and  where 
\begin{equation}
g(z)=-\mu_2z^{-1}-\mu_1z^{-2}=-\frac{\mu_2\,z+\mu_1}{z^2}.
\end{equation}
 
We now take $\beta=-1$ and $\gamma = -(k+1)$ in Eq.~\eqref{eq:sol_inv_beta(chi)}, and so
have that 
$$\chi_g(z)=\sqrt{\frac{g(1/z)}{\gamma}}=\sqrt{\frac{\mu_2 z+\mu_1z^2}{k+1}}:=\eta(z).$$
 We can thus finally 
express the solution as 
\begin{equation}
	y(x)=K_3\left({\rm B}^{-1}_\eta\left(\frac{1}{2k+2},\,\frac1{2};\,2(k+1)K_3^{k}\,  (x-K_2)  \right)\right)^\frac{1}{2k+2},
\end{equation}
that is, 
\begin{equation}
		y(x)=K_3\;{\rm sa}_{\eta}\left(2k+1;\; (k+1)K_3^{k}  (x-K_2)  \right).
	\end{equation}

In the particular case $k_2=k=1,\,k_1=3,\,\mu_1=1,\, \mu_2=3$, Eq.~\eqref{eq:EDOmus} reduces to
\begin{equation}\label{ric2}
	y'' +3y y' +y^3=0,
\end{equation}
which has been widely studied,  see, e.g.,  the references~\cite{Mahomed2007,Karasu2009, Nakpim2010, Mustafa2013, Orverem2021}. Its solution is  then  given by
\begin{equation}\label{gsnho}
		y(x)=K_3\sqrt[4]{{\rm B}^{-1}_\eta\left(\frac{1}4,\,\frac1{2};\, 4 K_3  (x-K_2)  \right)}=K_3\,{\rm sa}_\eta\left(3;\, 2 K_3  (x-K_2)  \right)
\end{equation}
where 
\begin{equation}
	\eta(z)=\sqrt{\frac{3z+z^2}{2}}.
\end{equation}

We note that the above choice of $\eta(z)$ corresponds to the third case 4.3 of our examples of specific choices of $\chi(s)$ discussed in Sections 4.3 and 5.1. 
Making use of Eq.~\eqref{ast}, and defining new arbitrary constants $A$ and $B$ in
terms of $K_2$ and $K_3$ via
$$
K_3^2(A+K_2)^2+1=0,\qquad{} A+B+2K_2=0,
$$
we find that \eqref{gsnho} can be written
\begin{equation}
y(x)=\frac1{x+A}+\frac1{x+B},
\end{equation}
which is well-known to be the general solution of the second order member of the
Riccati hierarchy \eqref{ric2}, i.e., of the case $n=2$ of the sequence of equations
$\displaystyle \left(\frac{d}{dx}+y\right)^n y=0$. Since, setting $K_3=iK_1$, we have
$$
A=\pm\frac{1}{K_1}-K_2,\qquad{} B=\mp\frac{1}{K_1}-K_2,
$$
we see that the particular solutions obtained when (a) $A=B$, (b) $|B|\to\infty$,
and (c) $|A|\to\infty$, $|B|\to\infty$, i.e.
$$
y(x)=\frac{2}{x+A},\qquad{} y(x)=\frac{1}{x+A},\qquad{} y(x)=0,
$$
respectively, can be obtained as the following corresponding special cases of
\eqref{gsnho}: (a) $|K_1|\to\infty$
and $K_2$ finite, (b) $K_2=\pm1/K_1-C$, $C$ a fixed finite constant,
and $K_1\to 0$, (c) $K_1\neq0$ (including $|K_1|\to\infty$) and $|K_2|\to\infty$.

It is interesting to note that, not only do our new functions allow us to express
the solutions of differential equations, but also, our knowledge of the properties 
and special cases of such solutions, and of alternative expressions for them, then
allows us to deduce properties, such as limiting cases, of our new functions.

\subsection{The nonlinear oscillator}
The nonlinear harmonic oscillator with mass variation has recently been studied in~\cite{Cveticanin2025b}. We  begin   with the equation
\begin{equation}\label{eq:nonlinear_osc}
	\ddot x +\omega_\alpha^2 x^\alpha+ \frac{m'(x)}{m(x)} \left(\frac{\dot x^2}{2}+\omega_\alpha^2\frac{x^{\alpha+1}}{\alpha+1}\right)=0,
\end{equation}
or equivalently,
\begin{equation}
	\frac{\ddot x x}{\dot x^2} +\omega_\alpha^2 \frac{x^{\alpha+1}}{\dot x^2}+ \frac{m'(x)}{m(x)} x \left(\frac{1}{2}+\frac{\omega_\alpha^2}{\alpha+1}\frac{x^{\alpha+1}}{\dot x^2}\right)=0,
\end{equation}
 where we use a dot $\dot {}$ to denote derivatives with respect to the 
independent variable $t$. 
In this  subsection  we supposse that the mass variation satisfies $\frac{m' (x)}{m(x)}=\frac kx,$ or equivalently $m(x)=Cx^k.$ Then Eq.~\eqref{eq:nonlinear_osc} reads
\begin{equation}\label{eq:nonlinear_power}
	\frac{\ddot x x}{\dot x^2} +\omega_\alpha^2 \frac{x^{\alpha+1}}{\dot x^2}+ k \left(\frac{1}{2}+\frac{\omega_\alpha^2}{\alpha+1}\frac{x^{\alpha+1}}{\dot x^2}\right)=0,
\end{equation}
that is,
\begin{equation}\label{eq:EDO}
	\frac{\ddot x x}{\dot x^2} =-\omega_\alpha^2\, \frac{k+\alpha+1}{\alpha+1}\frac{x^{\alpha+1}}{\dot x^2}-\frac k2.
\end{equation}
We recall that this equation depends only on the \textit{physical} parameters $\alpha$, $k$  and $\omega_\alpha$. 

Using the notation
\begin{equation}
	g(z):=-Az^{-2}-B=-\frac{A+Bz^2}{z^2},
\end{equation}
with $A=\omega_\alpha^2\, \frac{k+\alpha+1}{\alpha+1}$ and $B=\frac k2,$  Eq.~\eqref{eq:nonlinear_power} is written
\begin{equation}
	\frac{\ddot x x}{\dot x^2} =g\left(x^{-\frac{\alpha+1}2}\dot x\right).
\end{equation}
In addition, letting $\beta=-1$ and $\gamma=-(\alpha+1)/2$
 in Eq.~\eqref{eq:sol_inv_beta(chi)},  we have that 
\begin{equation}
	\frac{g(1/z)}\gamma=\frac{2Az^2+k}{\alpha+1}=\frac{2\,(k+\alpha+1)\,\omega_\alpha^2}{(\alpha+1)^2}\,z^2+\frac k{\alpha+1}.
\end{equation}
 Thus,  denoting 
\begin{equation}
	\zeta(z)=\sqrt{\frac{2\,(k+\alpha+1)\,\omega_\alpha^2}{(\alpha+1)^2}\,z^2+\frac k{\alpha+1}},
\end{equation}
we have that the solution of Eq.~\eqref{eq:nonlinear_power} is given by 
\begin{equation}
x(t)=K_3\left({\rm B}^{-1}_\zeta\left(\frac1{\alpha+1},\,\frac1{2};\,(\alpha+1)K_3^{\frac{\alpha-1}{2}}(t-K_2)\right)\right)^\frac{1}{\alpha+1}.
\end{equation}
That is,
\begin{equation}
	x(t)=K_3\,{\rm sa}_\zeta\left(\alpha;\,\frac{(\alpha+1)K_3^{\frac{\alpha-1}{2}}}{2}\,(t-K_2)\right).
\end{equation}
Additionally, since $\zeta (z)$ has the same structure as the deformation function $\chi_{a,b}$  of the first of our illustrative examples, i.e., case 4.1 (see Sections 4.1 and 5.1),  we have that the incomplete $\zeta$-Beta function is given in terms of the classical incomplete Beta function, as shown in~\eqref{eq:chi_a-b_beta}.

In  the  more general case $\frac{m' (x)}{m(x)}=\frac {\tilde h(x^{\frac{\alpha+1}2}/ \dot x)}x,$ we have 
\begin{equation}
	\frac{\ddot x x}{\dot x^2} +\omega_\alpha^2 \frac{x^{\alpha+1}}{\dot x^2}+ \tilde h(x^{\frac{\alpha+1}2}/ \dot x)\left(\frac{1}{2}+\frac{\omega_\alpha^2}{\alpha+1}\frac{x^{\alpha+1}}{\dot x^2}\right)=0,
\end{equation}
 or  
\begin{equation}
	\frac{\ddot x x}{\dot x^2} 
	=-\frac{\omega_\alpha^2}{\alpha+1}\left( \widetilde h(x^{\frac{\alpha+1}2} /\dot x)+\alpha+1\right)\frac{x^{\alpha+1}}{\dot x^2}-\frac 12  \widetilde h(x^{\frac{\alpha+1}2}/ \dot x).
\end{equation}

We define 
\begin{equation}
g(z):= -\widetilde h(1/z)\left(\frac{\overline A }{z^2}+\frac 12\right)-\frac{\omega_\alpha^2} {z^2},\quad \overline A=\frac{\omega_\alpha^2 }{\alpha+1}.
\end{equation}
Then 
\begin{equation}
	\frac{\ddot x x}{\dot x^2} 
	=g\left(
	x^{-\frac{\alpha+1}2}\dot x
\right).
\end{equation}

We again let $\beta=-1$ and $\gamma=-\frac{\alpha+1}2$  in Eq.~\eqref{eq:sol_inv_beta(chi)}. After  some algebraic manipulations it follows that
%
%
%
%
%

\begin{equation}
	\sqrt{\frac{g(1/z)}{\gamma}}=\frac {\sqrt{(2\omega_\alpha^2\,z^2+\alpha+1)\,\widetilde h(z)+  2 \omega_\alpha^2(\alpha+1)z^2}}{\alpha+1}:=\Phi(z).
\end{equation}
Finally,  we obtain 
\begin{equation}
		x(t)=K_3 \,{\rm sa}_\Phi\left(
		\alpha;\frac{(\alpha+1)K_3^{\frac{\alpha-1}{2}}}{2}\,(t-K_2)\right).
	\end{equation}

\begin{remark}[The "general" case]
The methodology described in Subsection  5.2  
is applicable to  other  differential equations  having  the form of Eq.~\eqref{eq:nonlinear_osc_f} studied in~\cite{Cveticanin2020a}. 
More precisely, one can rewrite this equation as
$$
\frac{\ddot x x}{(\dot x)^2}=\frac{x}{(\dot x)^2}f(x,\dot x)-\omega_\alpha ^2\frac{x^{\alpha+1}}{(\dot x)^2}.
$$
Then, in the case that the function $f$ has the form 
$$
f(x,\dot x)=\frac{(\dot x)^2}x F(x^{-\frac{\alpha+1}2}\dot x),
$$
for  some  arbitrary function $F$,  we have that  the solution of Eq.~\eqref{eq:nonlinear_osc_f} is given in terms of the $\chi_g$-Ateb sine function with 
$$
g(z)=F(z)-\frac{\omega_\alpha^2}{z^2}.
$$
 Moreover,  if the function $f$ has the form
$$
f(x,\dot x)=\frac{(\dot x)^2}y F(x^{\gamma}\dot x)+\omega_\alpha^2 x^\alpha,
$$
for an arbitrary real number $\gamma$ and  some  arbitrary function $F$, then the solution of equation~\eqref{eq:nonlinear_osc_f} can be expressed in terms of the $\chi_F$-Ateb sine function. 
\end{remark}

\section{Conclusions}
In this  paper  we propose a generalization of the main trigonometric functions, beyond  the parametric extension of Linqvist, through the use of a positive function as a deformation generator, in  a way that is consistent  with previous extensions of the logarithmic and exponential functions. We study the main basic mathematical properties of  our new  $\chi$-trigonometric functions, as well as their relation with the previous $\chi$-logarithmic functions, which in turn allow us to establish the corresponding Pytheagorean-like relations.  These last, in fact,  include the fundamental trigonometric relation and some known extensions, but also other fundamental relations between special functions such as the logarithmic and exponential integrals. 

 We also define  a new extension of the incomplete Beta function,  giving, indeed,  two alternative definitions:  firstly,  by using the $\chi$-trigonometric functions, and  secondly,  by using the $\chi$-exponential function. We show their  usefulness in expressing  the general solution of a family of nonlinear autonomous differential equations, including those related to the nonlinear harmonic oscillator. The whole family of solutions,  which, it is worth remarking, includes  an arbitrary function as a degree of freedom, can be expressed through an extension of the Ateb sine function defined  using our  new $\chi$-Beta function.

The new $\chi$-deformed functions introduced here would seem to be worthy of further
study, 
since they allow us to obtain closed-form expressions for the general solution of a
relevant family of nonlinear differential equations  having   both theoretical and applied interest.  Further applications will be considered in later papers.

\section*{Acknowledgements}

The authors thank the Universidad Rey Juan Carlos for funding through the project
2025/SOLCON-160677. PRG and AP also thank the Universidad Rey Juan Carlos for 
funding as members of the Grupo de Investigaci\'on de Alto Rendimiento DELFO.

\section*{Conflict of Interest Statement}
This work does not have any conflicts of interest.

	\bibliographystyle{unsrt}
\bibliography{refsbis}

\end{document}